\definecolor{somepurple}{HTML}{663399}
\newcommand{\K}{\mathcal{K}}
\newcommand{\R}{\mathbb{R}}
\newcommand{\dotprod}[2]{\left\langle #1,#2\right\rangle}
\newcommand{\norm}[1]{\left\| #1 \right\|}
\newcommand{\prox}[2]{\mathrm{prox}_{#1}\left(#2\right)}
\newcommand{\n}[1]{\| #1 \|}
\newcommand{\lr}[1]{\langle #1\rangle}
\newtheorem{theorem}{Theorem}
\newtheorem{lemma}{Lemma}
\newtheorem{assumption}{Assumption}
\newtheorem{corollary}{Corollary}
\newcommand{\EE}{\mathbb{E}}
\newcommand{\cO}{{\cal O}}
\newcommand{\cX}{{\cal X}}
\newcommand{\cY}{{\cal Y}}
\newcommand{\eqdef}{\stackrel{\text{def}}{=}}
\newcommand{\mA}{{\bf A}}
\newcommand{\mB}{{\bf B}}
\newcommand{\mI}{{\bf I}}
\newcommand{\mT}{{\bf T}}
\newcommand{\mU}{{\bf U}}
\newcommand{\mV}{{\bf V}}
\newcommand{\mSigma}{{\bf \Sigma}}
\def\<#1,#2>{\left\langle #1,#2\right\rangle}
\begin{document}
\twocolumn[

\aistatstitle{Revisiting Stochastic Extragradient}

\aistatsauthor{ Konstantin Mishchenko\textsuperscript{1} \And Dmitry Kovalev\textsuperscript{1} \And Egor Shulgin\textsuperscript{1,~3} \AND Peter Richt\'{a}rik\textsuperscript{1} \And Yura Malitsky\textsuperscript{2}}
\runningauthor{Konstantin Mishchenko, Dmitry Kovalev, Egor Shulgin, Peter Richt\'{a}rik, Yura Malitsky}

\aistatsaddress{\textsuperscript{1}KAUST \And  \textsuperscript{2}EPFL \And  \textsuperscript{3}MIPT}]
	
\begin{abstract}
We fix a fundamental issue in the stochastic extragradient method by providing a new sampling strategy that is motivated by approximating implicit updates. Since the existing stochastic extragradient algorithm, called Mirror-Prox, of~\citep{juditsky2011solving} diverges on a simple bilinear problem when the domain is not bounded, we prove guarantees for solving variational inequality that go beyond existing settings. Furthermore, we illustrate numerically that the proposed variant converges faster than many other methods on bilinear saddle-point problems. We also discuss how extragradient can be applied to training Generative Adversarial Networks (GANs) and how it compares to other methods. Our experiments on GANs demonstrate that the introduced approach may make the training faster in terms of data passes, while its higher iteration complexity makes the advantage smaller.
\end{abstract}
\section{Introduction}
Algorithmic machine learning has for a long time been centered around minimization of a single function. A lot of works are still targeting solving empirical risk minimization and new results touch upon methods as old as gradient descent itself.  

However, as the gap between lower bounds and available minimization algorithms is shrinking, the focus is shifting towards more challenging problems such as variational inequality, where a significant number of long-unresolved questions is remaining. This problem has a rich history with applications in economics and computer science, but the arising applications provide new desiderata on algorithm properties. In particular, due to high dimensionality and large scale of the corresponding problems, we shall consider the impact of having a {\em stochastic} objective. In particular, recently invented generative adversarial neural networks~\citep{goodfellow2014generative} are often trained using schemes that resemble primal-dual and variational inequality methods, which we shall discuss in detail later.

Variational inequality can be seen as an extension of the necessary first-order optimality condition for minimization
problem, which is also sufficient in the convex case.
When the operator involved in its formulation is monotone and is equal to
the gradient of a function, this corresponds to convex
minimization.

Formally, the problem that we consider is that of finding a point $x^*$ satisfying
\begin{equation}
	g(x) - g(x^*)+ \dotprod{F(x^*)}{x - x^*} \geq 0, \text{ for all } x \in \R^d,\label{eq:vi}
\end{equation}
where
$g\colon \R^d\to {\R}\cup \{+\infty\} $ is a proper lower
semi-continuous convex function and 
$F\colon \R^d \rightarrow \R^d$ is a monotone operator. Some application of interest are not covered by the monotonicity framework, but, unfortunately, little
is known about variational inequality and even saddle point problems when
monotonicity is missing. Thus, we stick to this assumption and rather try
to model oscillations arising in some problems by considering particularly
unstable~\citep{gidel2018negative, chavdarova2019reducing} bilinear
minimax problems.

Of particular interest to us is the situation where $F(x)$ is the expectation with respect to random variable $\xi$ of the random operator $F(x;\xi)$. This formulation has two aspects. First, one can model data distribution, especially when a large dataset is available and the problem is that of minimizing empirical loss. Second, $\xi$ can be a random variable sampled by one of the GAN networks, called \textit{generator}. In any case, throughout the work we assume that we sample unbiased estimates $F(\cdot; \xi)$ of $F(\cdot)$ such that $\EE_\xi F(\cdot; \xi) = F(\cdot)$.

Let us explicitly mention that a special case of~\eqref{eq:vi} is constrained saddle point optimization,
\begin{align*}
    \min_{x\in\cX} \max_{y\in \cY} f(x, y),
\end{align*}
where $\cX$ and $\cY$ are some convex sets and $f$ is a smooth function. While this example looks deceptively simple, simultaneous gradient descent-ascent is known to diverge on this problem~\citep{goodfellow2016nips} even when $f$ is convex-concave. In particular, the objective $f(x, y)=x^\top y$ leads to geometrical divergence for any nontrivial initialization~\citep{daskalakis2018training}. See~\citep{mishchenko2019stochastic} for more applications of the convex-concave saddle point problem in machine learning and~\citep{gidel2018variational} for extra discussion on variational inequality and its relation to GANs.

\subsection{Related work}
The extragradient method was first proposed by~\citep{korpelevich1977extragradient}. Since then there have been developed a number of its extensions, most famous of which is the Mirror-Prox method~\citep{nemirovski2004prox} that uses mirror descent update. At each iteration, the standard extragradient method is trying to approximate the implicit update, which is known to be much more stable. Assuming the operator is Lipschitz, it is enough to compute the operator twice to do the approximation accurately enough. We base our intuition upon this property and we shall discuss it in detail later in the paper.

While extragradient uses future information, i.e., information from one gradient step ahead, past information can also help to stabilize convergence. In particular, \textit{Optimistic mirror descent} (OMD), first proposed by~\citep{rakhlin2013online} for convex-concave zero-sum games, has been analyzed in a number of works~\citep{mokhtari2019unified, daskalakis2018last, gidel2018variational} and it was applied to GAN training in~\citep{daskalakis2018training}. The rates that we prove in this work for stochastic extragradient match the best known results for OMD, but are given under more general assumptions. Moreover, the method of~\citep{gidel2018variational} diverges on bilinear problems.

Many other techniques also allow to improve stability and achieve convergence for monotone operators in the particular case of saddle point problems. For instance, \textit{alternating} gradient descent-ascent does not, in general,  converge to a solution~\citep{gidel2018negative}, the negative momentum trick proposed in~\citep{gidel2018negative} can fix this. 

We note that our work is not the first to consider a variant of stochastic extragradient. A stochastic version of the Mirror-Prox method~\citep{nemirovski2004prox} was analyzed in~\citep{juditsky2011solving} under pretty restrictive assumptions. While deterministic extragradient approximates implicit update, the authors of~\citep{juditsky2011solving} chose to sample two different instances of the stochastic operator, which leads to a poor approximation of stochastic implicit update unless the variance is tiny. It was observed in~\citep{chavdarova2019reducing} that this approach leads to terrible practical performance, dubious convergence guarantees and divergence on bilinear problems. All later variants of stochastic extragradient, that we are aware of, consider the same update model.

Surprisingly, a variant of extragradient was also rediscovered by practitioners~\citep{metz2016unrolled} as a way to stabilize training of GANs. The main difference of the method of~\citep{metz2016unrolled} to what we consider is in applying extra steps only on one of two neural networks. In addition, \citep{metz2016unrolled} proposed to use more than one extra step and claim that in on specific problems 5 steps is a good trade-off between results quality and computation.

\citep{chavdarova2019reducing} showed that the methods of~\citep{juditsky2011solving} and~\citep{gidel2018variational} diverge on stochastic bilinear saddle point problem. As a fix, they proposed a stochastic extragradient method with variance reduction (SVRE), which achieves a linear rate $\cO ((n + \frac{L}{\mu})\log \frac{1}{\varepsilon})$. However, their theory works only for saddle point problems and it does not cover the case without strong monotonicity, so it is less general than ours.

\subsection{Theoretical background}
Here we provide several technical assumptions that are standard for variational inequality.
\begin{assumption}
	Operator $F\colon \R^d\to \R^d$ is monotone, that is 
	$
		\dotprod{F(x) - F(y)}{x-y} \geq 0
	$ for all $x,y \in \R^d$.
	In stochastic case, we assume that $F(x; \xi)$ is monotone almost surely.
\end{assumption}
The monotonicity assumption is an extension of the notion of convexity
and is quite standard in the literature. There are several
versions of pseudo-monotonicity, but without it the variational
inequality problem becomes extremely hard to solve. 

\begin{assumption}
	Operator $F(\cdot; \xi)$ is almost-surely $L$-Lipschitz,  that is for all $x,y \in \R^d$
	\begin{equation}
		\norm{F(x;\xi) - F(y;\xi)} \leq L\norm{x-y}.
	\end{equation}
\end{assumption}
In addition to operator monotonicity, we ask for convexity and some regularity properties of $g(\cdot)$ as given below.
\begin{assumption}
	Function $g\colon \R^d \rightarrow {\R}\cup \{+\infty\}$ is
        lower semi-continuous and $\mu$-strongly convex for $\mu\geq 0$, i.e., for all $x, y\in \R^d$ and any $h\in \partial g(y)$
	\begin{align*}
		g(x) - g(y) - \<h, x - y>
		\ge \frac{\mu}{2}\|x - y\|^2.
	\end{align*}
	If $\mu=0$, then $g$ is just convex.
\end{assumption}
Even in simple minimization problems, the classical theoretical analysis of stochastic methods ask for uniformly bounded variance, an assumption rarely satisfied in practice. Recent developments of the theory for SGD have removed this assumption, but we are not aware of any results in more general settings. Thus, it is one of our contributions is to relax the uniform variance bound the one below.
\begin{assumption}
	In the strongly convex case, we assume that $F$ has bounded variance at the optimum, i.e.,
	\begin{align*}
		\EE \|F(x^*; \xi) - F(x^*)\|^2 
		\le \sigma^2.
	\end{align*}
\end{assumption}
Depending on the assumptions, we will either work with the variance at the optimum or with a merit function, which involves the variance of a bounded set.

\begin{algorithm}[t]
\caption{Same-Sample Stochastic Extragradient Method for Variational Inequality.}
\label{alg:eg_vi}
\begin{algorithmic}[1]
	\State {\bf Parameters:} $x^0 \in \K$, stepsize $\eta > 0$ 
	\For{$t = 0,1,2,\ldots$}
	\State Sample $\xi^t$
	\State $y^t = \prox{\eta g}{x^t - \eta F(x^t; \xi^t)}$
	\State $x^{t+1} = \prox{\eta g}{x^t - \eta F(y^t; \xi^t)}$
	\EndFor
\end{algorithmic}	
\end{algorithm}
\begin{algorithm}[t]
   \caption{The extragradient method for min-max problems.}
   \label{alg:eg_minimax}
\begin{algorithmic}[1]
   \Require Stepsizes $\eta_1, \eta_2$, initial vectors $x^0$, $y^0$
   \For{$t=0,1,\dotsc$}
	   \State $u^t = x^t - \eta_1 \nabla_x f(x^t, y^t)$
	   \State $v^t = y^t + \eta_1 \nabla_y f(x^t, y^t)$
	   \State $x^{t+1} = x^t - \eta_2 \nabla_x f(u^t, v^t)$
	   \State $y^{t+1} = y^t + \eta_2 \nabla_y f(u^t, v^t)$
   \EndFor
\end{algorithmic}
\end{algorithm}

\section{Theory}
It is known that implicit updates are more stable when solving
variational inequality and sometimes it is argued that the main goal
of algorithmic design is to approximate those~\citep{mokhtari2019unified}. From that
perspective, the current stochastic extragradient, which was suggested
in~\citep{juditsky2011solving}, does not make much sense. Since it uses
two independent samples, it will rarely approximate the implicit
update, so it is rather not surprisingly that it fails on bilinear
problems.

To better explain this phenomenon, below we show that extragradient efficiently approximates implicit update.
\begin{theorem}\label{th:approx}
	Let $F$ be an $L$-Lipschitz operator and define $y\eqdef \prox{\eta g}{x - \eta F(x)}$, $z\eqdef \prox{\eta g}{x - \eta F(y)}$, $w\eqdef \prox{\eta g}{x - \eta F(w)}$, where $\eta>0$ is any stepsize. Then,
	\begin{align*}
		\|w - z\|
		\le \eta^2 L^2 \|w - x\|.
	\end{align*}
\end{theorem}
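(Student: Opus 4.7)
The plan is to exploit two standard facts twice each: non-expansiveness of the proximal operator $\prox{\eta g}{\cdot}$ (valid because $g$ is proper, lower semi-continuous, and convex by Assumption~3) and $L$-Lipschitzness of $F$. Both estimates follow by chaining these properties, using $w$ as the common anchor. There is no real obstacle; the only subtlety is lining up the right comparisons so that the exponent $\eta^2 L^2$ emerges.

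First I would bound $\|w-z\|$ by comparing the two proximal inputs. Since $w$ and $z$ are both prox of the form $\prox{\eta g}{x-\eta F(\cdot)}$ with different points fed into $F$, non-expansiveness of the prox gives
\begin{align*}
\|w-z\| &= \left\|\prox{\eta g}{x - \eta F(w)} - \prox{\eta g}{x - \eta F(y)}\right\| \\
&\le \eta\|F(w) - F(y)\| \le \eta L\|w - y\|.
\end{align*}

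Next I would bound $\|w-y\|$ by exactly the same two-step argument, this time comparing the prox inputs $x-\eta F(w)$ and $x-\eta F(x)$:
\begin{align*}
\|w-y\| &= \left\|\prox{\eta g}{x-\eta F(w)} - \prox{\eta g}{x - \eta F(x)}\right\| \\
&\le \eta\|F(w)-F(x)\| \le \eta L\|w - x\|.
\end{align*}

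Combining the two displays immediately yields $\|w-z\| \le \eta L \cdot \eta L \|w-x\| = \eta^2 L^2\|w-x\|$, which is the claim. The proof reveals the intuition behind the theorem: the extragradient iterate $z$ differs from the implicit iterate $w$ only through the gap $F(y)-F(w)$, and $y$ itself is already a first-order approximation of $w$, so composing two Lipschitz estimates produces the quadratic factor $\eta^2 L^2$ that makes $z$ a remarkably accurate proxy for the implicit update whenever $\eta L < 1$.
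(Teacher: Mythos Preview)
Your proof is correct and matches the paper's approach exactly: two applications of non-expansiveness of $\prox{\eta g}{\cdot}$ followed by Lipschitzness of $F$, chained through the anchor $w$. The only difference is that the paper packages this as an induction proving the more general bound $\|w-y_k\|\le (\eta L)^k\|w-x\|$ for $k$ extra steps, of which your argument is precisely the $k=2$ case.
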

The right-hand side in Theorem~\ref{th:approx} serves as a measure of stationarity and decreases as $x$ gets closer to the problem's solution. The essential part of the bound is that the error is of order $O(\eta^2)$ rather than $O(\eta)$. This allows the approximation to be better than simple gradient update making it possible for the method to solve variational inequality. One can also mention that having extra factor of $\eta L$ is beneficial only when $\eta<\nicefrac{1}{L}$, which provides a good intuition on why extragradient uses smaller stepsizes than gradient.

However, when the stochastic update is used, this result is not applicable directly. If two different samples of the operator are used, $F(\cdot; \xi^t)$ and $F(\cdot; \xi^{t+1/2})$, as is done in stochastic Mirror-Prox~\citep{juditsky2011solving},  then the update does not seem to approximate implicit update of any operator. This is why we propose in this work to use the same sample, $\xi^t$, when computing $y^t$ and $x^{t+1}$, see Algorithm~\ref{alg:eg_vi}. Equipped with our update, we are always approximating the implicit update of stochastic operator $F(\cdot; \xi^t)$ and our theoretical results suggest that this is the right approach.
\subsection{Stochastic variational inequality}
Our first goal is to show that our stochastic version of the extragradient method converges for strongly monotone variational inequality. The next theorem provides the rate that we obtained.
\begin{theorem}\label{th:linear_rate_str_mon}
	Assume that $g$ is a $\mu$-strongly convex function, operator
        $F(\cdot; \xi)$ is almost surely monotone and $L$-Lipschitz,
        and that its variance at the optimum $x^*$ is bounded by constant, $\EE \|F(x^*; \xi) - F(x^*)\|^2 \le \sigma^2$. Then, for any $\eta\le \nicefrac{1}{(2L)}$
	\begin{align*}
		\EE \|x^t - x^*\|^2
		\le \left(1 - \nicefrac{2\eta\mu}{3}\right)^t\|x^0 - x^*\|^2 + \nicefrac{3\eta\sigma^2}{\mu}.
	\end{align*}
\end{theorem}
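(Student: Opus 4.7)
The plan is to derive a one-step recursion on $\EE\|x^t-x^*\|^2$ by combining the prox optimality inequalities for both updates of Algorithm~\ref{alg:eg_vi}, then extracting a contraction from the strong convexity of $g$ and handling the noise using the variance-at-optimum assumption. Crucially, because the \emph{same} sample $\xi^t$ is used in both prox calls, the Lipschitz bound on $F(\cdot;\xi^t)$ will cleanly control the gap between $F(x^t;\xi^t)$ and $F(y^t;\xi^t)$.

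First I would write the variational characterization of each prox step: $y^t=\prox{\eta g}{x^t-\eta F(x^t;\xi^t)}$ gives, for every $u$, $\dotprod{y^t-x^t+\eta F(x^t;\xi^t)}{u-y^t}\ge \eta(g(y^t)-g(u))$, and analogously for $x^{t+1}$. Setting $u=x^{t+1}$ in the first and $u=x^*$ in the second, adding them, and using the three-point identity $\dotprod{a-b}{c-a}=\tfrac12(\|c-b\|^2-\|c-a\|^2-\|a-b\|^2)$ twice, I obtain an inequality of the form
\begin{align*}
\|x^{t+1}-x^*\|^2 &\le \|x^t-x^*\|^2 - \|y^t-x^t\|^2 - \|y^t-x^{t+1}\|^2 \\
&\quad + 2\eta\dotprod{F(y^t;\xi^t)}{x^*-y^t} + 2\eta\dotprod{F(x^t;\xi^t)-F(y^t;\xi^t)}{x^{t+1}-y^t} - 2\eta(g(y^t)-g(x^*)).
\end{align*}
Next, monotonicity of $F(\cdot;\xi^t)$ gives $\dotprod{F(y^t;\xi^t)-F(x^*;\xi^t)}{x^*-y^t}\le 0$, and strong convexity of $g$ together with the VI condition (which yields $-F(x^*)\in\partial g(x^*)$) gives $g(y^t)-g(x^*)\ge \dotprod{F(x^*)}{x^*-y^t}+\tfrac{\mu}{2}\|y^t-x^*\|^2$. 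Combining these collapses the two problematic summands into $2\eta\dotprod{F(x^*;\xi^t)-F(x^*)}{x^*-y^t}-\eta\mu\|y^t-x^*\|^2$.

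It then remains to tame the Lipschitz cross term and the noise cross term. For the former, Cauchy--Schwarz plus almost-sure Lipschitzness yield $2\eta\dotprod{F(x^t;\xi^t)-F(y^t;\xi^t)}{x^{t+1}-y^t}\le 2\eta L\|y^t-x^t\|\,\|y^t-x^{t+1}\|$, which, by Young's inequality and $\eta\le \nicefrac{1}{2L}$, is absorbed into fractions of the two negative squared-norm terms $-\|y^t-x^t\|^2$ and $-\|y^t-x^{t+1}\|^2$. For the noise cross term I split $x^*-y^t=(x^*-x^t)+(x^t-y^t)$; since $x^t$ is independent of $\xi^t$, the $x^*-x^t$ piece vanishes in conditional expectation, while on the $x^t-y^t$ piece Young's inequality produces $c\eta^2\|F(x^*;\xi^t)-F(x^*)\|^2+c^{-1}\|x^t-y^t\|^2$, with $c$ chosen so that the coefficient on $\|x^t-y^t\|^2$ remains $\le 0$ after accounting for the Lipschitz contribution. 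Finally, I would use $\|y^t-x^*\|^2\ge \tfrac12\|x^{t+1}-x^*\|^2-\|x^{t+1}-y^t\|^2$ so that the $-\eta\mu\|y^t-x^*\|^2$ term feeds a genuine contraction on $\|x^{t+1}-x^*\|^2$ (the extra $+\eta\mu\|x^{t+1}-y^t\|^2$ is harmless because $\eta\mu\le\tfrac12$).

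Taking conditional expectation and applying $\EE\|F(x^*;\xi^t)-F(x^*)\|^2\le\sigma^2$ leaves a recursion of the shape $\EE\|x^{t+1}-x^*\|^2\le (1-c_1\eta\mu)\EE\|x^t-x^*\|^2 + c_2\eta^2\sigma^2$, which I unroll by summing the geometric series, producing the stated bound with the explicit constants $2/3$ and $3$ after choosing the Young coefficients optimally. The main obstacle is the noise cross-term $\dotprod{F(x^*;\xi^t)-F(x^*)}{x^*-y^t}$: because $y^t$ already depends on $\xi^t$, its expectation does not vanish directly, and one must split it into a zero-mean part and a part absorbed through Young's inequality. Balancing the three Young constants so that none of $\|y^t-x^t\|^2$ or $\|y^t-x^{t+1}\|^2$ is overused, all while respecting the stepsize ceiling $\eta\le\nicefrac{1}{2L}$, is the principal technical calibration of the proof.
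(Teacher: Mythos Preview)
Your proposal is correct and follows essentially the same route as the paper: prox inequalities for both updates, the three-point identity, almost-sure monotonicity plus strong convexity through the VI condition, a Young/Lipschitz bound on the cross term, and the key decomposition $x^*-y^t=(x^*-x^t)+(x^t-y^t)$ to handle the noise (this is exactly the paper's Lemma~\ref{lem:eg_variance}). The only noticeable difference is that the paper also builds strong convexity into the prox characterization itself (their Lemma~\ref{lem:prox-strong} yields the extra terms $\tfrac{\mu}{2}\|x^{t+1}-x^*\|^2$ and $\tfrac{\mu}{2}\|x^{t+1}-y^t\|^2$), so strong convexity is used three times rather than once; combining $-\eta\mu(\|y^t-x^*\|^2+\|x^{t+1}-y^t\|^2)\le -\tfrac{\eta\mu}{2}\|x^{t+1}-x^*\|^2$ with the $(1+\eta\mu)$ already on the left gives the factor $1+\tfrac{3}{2}\eta\mu$ and hence the stated constants $2/3$ and $3$. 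With your version (strong convexity only via $g(y^t)-g(x^*)$) the same argument goes through but produces a slightly smaller contraction, roughly $1-\eta\mu/3$, so to match the paper's constants exactly you should invoke the strongly convex prox inequality rather than the merely convex one.
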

In the case where at the optimum the noise is zero, we recover a slight generalization of linear convergence of extragradient~\citep{tseng1995linear}. This is also similar to the rate proved for optimistic mirror descent in~\citep{gidel2018variational}, however we do not ask for uniform bounds on the variance. Therefore, we believe that this result is significantly more general.
 
\begin{theorem}\label{th:ergodic_conv}
	Let $g$ be a convex function, $F(\cdot; \xi)$ be monotone and
        $L$-Lipschitz almost surely. Then, the iterates of
        Algorithm~\ref{alg:eg_vi} with stepsize
        $\eta=\cO(\nicefrac{1}{(\sqrt{t}L)})$ satisfy for any 
        set $\cX$ and $x\in \cX$
	\begin{align*}
		\EE \left[g(\hat x^t) - g(x) + \< F(x), \hat x^t - x> \right]\\
		\le \frac{1}{\sqrt{t}L}\sup_{x\in\cX} \left\{\frac{L^2}{2}\|x^0 - x\|^2+ \sigma_x^2\right\}.
	\end{align*}
	where $\hat x^t = \frac{1}{t}\sum_{k=0}^t y^k$ and
        $\sigma_x^2\eqdef \EE \|F(x) - F(x;\xi)\|^2$, i.e., $\sigma_x^2$ is the
        variance of $F$ at point $x$.
\end{theorem}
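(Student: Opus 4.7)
The plan is to emulate the classical Mirror-Prox ergodic argument but exploit the fact that, in this algorithm, a single sample $\xi^t$ is used for both half-steps, so that each iteration inherits the almost-sure monotonicity of $F(\cdot;\xi^t)$.

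First, I would apply the optimality condition of the proximal operator to both updates: for any $z$,
\begin{align*}
\eta(g(z) - g(y^t)) &\ge \dotprod{x^t - y^t - \eta F(x^t;\xi^t)}{z - y^t},\\
\eta(g(z) - g(x^{t+1})) &\ge \dotprod{x^t - x^{t+1} - \eta F(y^t;\xi^t)}{z - x^{t+1}}.
\end{align*}
Plugging $z = x^{t+1}$ into the first and $z = x$ into the second and summing, the $g(x^{t+1})$ terms cancel. I would then use the three-point identity on the inner products $\dotprod{x^t - y^t}{x^{t+1} - y^t}$ and $\dotprod{x^t - x^{t+1}}{x - x^{t+1}}$, and split the operator contribution so that the ``sampled'' discretization error $\eta\dotprod{F(x^t;\xi^t) - F(y^t;\xi^t)}{x^{t+1} - y^t}$ appears on the right-hand side, while the main linear piece consolidates as $\eta\dotprod{F(y^t;\xi^t)}{y^t - x}$ on the left. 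Bounding the error term by Cauchy--Schwarz plus the Lipschitz assumption, $\eta L\n{x^t - y^t}\n{x^{t+1} - y^t}$, and applying Young's inequality with suitable constants, I absorb $\frac12\n{x^{t+1}-y^t}^2$ and leave only an $\eta^2 L^2\n{x^t - y^t}^2$ residual. This yields a recursion of the form
\begin{align*}
\eta\bigl(g(y^t)-g(x)\bigr) + \eta\dotprod{F(y^t;\xi^t)}{y^t - x} \le \tfrac12\n{x^t-x}^2 - \tfrac12\n{x^{t+1}-x}^2 - \tfrac12(1-\eta^2 L^2)\n{x^t - y^t}^2.
\end{align*}

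Next comes the main novelty: rewriting the sampled VI term without assuming a uniform variance bound. Using almost-sure monotonicity of $F(\cdot;\xi^t)$ (so $\dotprod{F(y^t;\xi^t)}{y^t - x} \ge \dotprod{F(x;\xi^t)}{y^t - x}$) and then adding and subtracting $F(x)$ and $x^t$,
\begin{align*}
\dotprod{F(x;\xi^t)}{y^t - x} = \dotprod{F(x)}{y^t - x} + \dotprod{F(x;\xi^t)-F(x)}{y^t - x^t} + \dotprod{F(x;\xi^t)-F(x)}{x^t - x}.
\end{align*}
The middle term is bounded via Young as $-\tfrac{1-\eta^2L^2}{2\eta}\n{y^t - x^t}^2 - \tfrac{\eta}{2(1-\eta^2L^2)}\n{F(x;\xi^t)-F(x)}^2$, with constants tuned to cancel the $\n{x^t - y^t}^2$ term from the previous step. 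The third term has zero conditional expectation because $x^t$ is measurable with respect to $\xi^0,\dots,\xi^{t-1}$ while $\xi^t$ is drawn independently; this is precisely where having $\sigma_x^2$ rather than a global variance bound becomes natural.

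Taking total expectations, the recursion telescopes in $\EE\n{x^t - x}^2$ and accumulates only the stochastic term, bounded by $\frac{\eta^2 \sigma_x^2}{2(1-\eta^2 L^2)}$ per step. Summing over $k = 0,\dots,t$, dividing by $(t+1)\eta$, and applying Jensen's inequality (convexity of $g$ and linearity of $\dotprod{F(x)}{\cdot}$) to pass the average inside yields the desired bound on $\EE[g(\hat x^t) - g(x) + \dotprod{F(x)}{\hat x^t - x}]$. Finally, choosing $\eta = \Theta(1/(L\sqrt{t}))$ balances the two terms and produces the $1/(\sqrt{t}L)$ rate with the stated supremum over $x\in\cX$. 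The main obstacle is step two: engineering Young's inequality so that a single parameter simultaneously (i) kills the $\n{y^t - x^t}^2$ residual, (ii) stays valid for the full range $\eta L \le 1$, and (iii) produces a stochastic term depending only on $\sigma_x^2$ at the comparator $x$ rather than on a uniform variance bound.
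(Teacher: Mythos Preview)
Your proposal is correct and follows essentially the same route as the paper: prox optimality conditions yield the one-step recursion, the Lipschitz error term is handled by Young's inequality, almost-sure monotonicity replaces $F(y^t;\xi^t)$ by $F(x;\xi^t)$, and the key decomposition $\dotprod{F(x;\xi^t)-F(x)}{y^t-x}=\dotprod{F(x;\xi^t)-F(x)}{y^t-x^t}+\dotprod{F(x;\xi^t)-F(x)}{x^t-x}$ isolates a zero-mean piece and a piece absorbed by the leftover $\n{y^t-x^t}^2$, after which telescoping and Jensen finish the argument. The only difference is cosmetic: the paper packages the variance step as a separate lemma and uses the fixed Young split $\eta\sigma_x^2+\tfrac{1}{4\eta}\n{y^t-x^t}^2$ (hence needs $\eta L\le\tfrac14$), whereas you tune the Young constant to $(1-\eta^2L^2)$, which works for the wider range $\eta L<1$ but changes nothing structurally.
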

The left-hand side in the bound above is a merit function that has been used in variational inequality literature~\citep{nesterov2007dual}. This result is more general than the one obtained in~\citep{gidel2018variational}, where the authors require for the same rate bounded variance and even $\EE\|F(x; \xi)\|^2\le M<\infty$  uniformly over $x$.

In fact, the claim that we prove in the appendix is a bit more general than the one presented in the previous theorem. If we know that $\sigma_x$ is sufficiently small on a bounded set $\cX$, then we can get a $\cO(\nicefrac{1}{t} + \sup_{x\in \cX} \sigma_x)$ rate, i.e., fast convergence to a neighborhood.
\subsection{Adversarial bilinear problems}
The work~\citep{gidel2018negative} argues that a good illustration of method's stability can be obtained when considering minimax bilinear problems, which is given by
\begin{align*}
    \min_x \max_y f(x, y) = x^\top \mB y + a^\top x + b^\top y,
\end{align*}
where $\mB$ is a full rank square matrix. One can show that if there exists a Nash equilibrium point, then $f(x, y) = (x - x^*)^\top \mB (y - y^*) + \textrm{const}$ for some pair $(x^*, y^*)$\footnote{If $a$ does not belong to the column space of $B$ or $b$ does not belong to the column space of $B^\top$, the unconstrained minimax problem admits no equilibrium. Otherwise, if we introduce $\tilde a, \tilde b$ such that $a=-B y^*$ and $b=-B^\top x^*$, we have $(x - x^*)^\top B(y-y^*) = x^\top By + a^\top x + b^\top y + (x^*)^\top B y^* $.}. This problem is particularly interesting because simple gradient descent-ascent diverges geometrically when solving it, 
\begin{theorem}\label{th:eg_bilinear}
    Let $f$ be bilinear with a full-rank matrix $\mB$ and apply Algorithm~\ref{alg:eg_minimax} to it. Choose any $\eta_1$ and $\eta_2$ such that $\eta_2<\nicefrac{1}{\sigma_{\max}(\mB)}$ and  $\eta_1\eta_2 < \nicefrac{2}{\sigma_{\max}(\mB)^2}$, then the rate is
    \begin{align*}
        \|x^t - x^*\|^2 + \|y^t - y^*\|^2
        \le \rho^{2t}(\|x^0 - x^*\|^2 + \|y^0 - y^*\|^2),
    \end{align*}
    where $\rho\eqdef  \max\left\{(1 -
        \eta_1\eta_2\sigma_{\max}(\mB)^2)^2 +
        \eta_2^2\sigma_{\max}(\mB)^2\right.$, $ \left.(1 -
        \eta_1\eta_2\sigma_{\min}(\mB)^2)^2 +
        \eta_2^2\sigma_{\min}(\mB)^2\right\}$.
\end{theorem}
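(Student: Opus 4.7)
The plan is to translate to the origin, linearize, and diagonalize via the SVD of $\mB$.

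First, I would reduce to the centered problem $f(x,y) = (x-x^*)^\top \mB (y-y^*)$ by shifting coordinates $u^t = x^t - x^*$, $v^t = y^t - y^*$. The operator is then the linear map $F(x,y) = (\mB(y-y^*),\, -\mB^\top(x-x^*))$, so extragradient becomes a linear recursion. Writing out one iteration of Algorithm~\ref{alg:eg_minimax} (plugging the half-step $\bar y = v + \eta_2 \mB^\top u$, $\bar x = u - \eta_1 \mB v$ into the corrector) gives
\begin{equation*}
\begin{pmatrix} u^{t+1} \\ v^{t+1}\end{pmatrix}
=
\begin{pmatrix} I - \eta_1\eta_2\,\mB\mB^\top & -\eta_1 \mB \\ \eta_2 \mB^\top & I - \eta_1\eta_2\,\mB^\top\mB\end{pmatrix}
\begin{pmatrix} u^{t} \\ v^{t}\end{pmatrix}.
\end{equation*}

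Next, I would diagonalize using $\mB = \mU\mSigma\mV^\top$ and change variables $\tilde u = \mU^\top u$, $\tilde v = \mV^\top v$. Orthogonality of $\mU,\mV$ preserves the norm, and the block matrix above becomes block-diagonal with one $2\times 2$ system per singular value $\sigma_i$:
\begin{equation*}
\begin{pmatrix}\tilde u_i^{t+1}\\ \tilde v_i^{t+1}\end{pmatrix}
=\begin{pmatrix} 1-\eta_1\eta_2\sigma_i^2 & -\eta_1\sigma_i \\ \eta_2\sigma_i & 1-\eta_1\eta_2\sigma_i^2\end{pmatrix}
\begin{pmatrix}\tilde u_i^{t}\\ \tilde v_i^{t}\end{pmatrix}.
\end{equation*}
This decouples the analysis into a collection of independent $2$-dimensional linear systems indexed by the singular values of $\mB$, and reduces the theorem to a per-$\sigma_i$ contraction estimate.

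For each $\sigma_i$, I would directly expand $|\tilde u_i^{t+1}|^2 + |\tilde v_i^{t+1}|^2$ and bound it by $\rho_i^{2}(|\tilde u_i^{t}|^2 + |\tilde v_i^{t}|^2)$ with $\rho_i^{2} \eqdef (1 - \eta_1\eta_2\sigma_i^2)^2 + \eta_2^2\sigma_i^2$. The two stepsize conditions $\eta_2 \sigma_{\max}(\mB) < 1$ and $\eta_1\eta_2\sigma_{\max}(\mB)^2 < 2$ guarantee $\rho_i < 1$ at every singular value. Finally, viewing $\rho^2(s) \eqdef (1-\eta_1\eta_2 s)^2 + \eta_2^2 s$ as a polynomial in $s = \sigma_i^2$, its second derivative $2\eta_1^2\eta_2^2 > 0$ makes it convex, so the maximum over $s \in [\sigma_{\min}(\mB)^2, \sigma_{\max}(\mB)^2]$ is attained at an endpoint, which is exactly the $\rho$ stated in the theorem. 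Summing over singular values and iterating the one-step contraction $t$ times yields the claim.

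The main obstacle I anticipate is the $2\times 2$ per-singular-value contraction: the iteration matrix is not symmetric when $\eta_1 \neq \eta_2$, so the naive expansion produces a cross term $2\sigma_i(\eta_2 - \eta_1)(1-\eta_1\eta_2\sigma_i^2)\,\tilde u_i\tilde v_i$ that must be controlled. I would either (i) introduce a weighted norm to cancel the cross term and then convert back to the unweighted norm at the cost of a small constant absorbed by the stepsize conditions, or (ii) compute the operator norm of the symmetrized $2\times 2$ matrix directly via its eigenvalues, whose determinant simplifies to the clean expression $[(1-\eta_1\eta_2\sigma_i^2)^2 + \eta_1\eta_2\sigma_i^2]^2$. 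Either route gives the advertised $\rho_i^2$, after which the convexity-in-$\sigma^2$ argument finishes the proof.
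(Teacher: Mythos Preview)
Your overall plan---center, linearize, SVD-block-diagonalize, then reduce to a per-singular-value $2\times 2$ contraction---is exactly the paper's route. The gap is in the very first step: you have written down the wrong iteration matrix. In Algorithm~\ref{alg:eg_minimax} the stepsize $\eta_1$ is used for \emph{both} extrapolation substeps and $\eta_2$ for \emph{both} corrector substeps. Carrying this through, with $u=x-x^*$ and $v=y-y^*$,
\begin{align*}
u^{t+1} &= u^t - \eta_2\,\mB\bigl(v^t + \eta_1 \mB^\top u^t\bigr),\\
v^{t+1} &= v^t + \eta_2\,\mB^\top\bigl(u^t - \eta_1 \mB v^t\bigr),
\end{align*}
so the block matrix has off-diagonal entries $-\eta_2\mB$ and $\eta_2\mB^\top$, not $-\eta_1\mB$ and $\eta_2\mB^\top$ as you wrote. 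After the SVD change of basis each $2\times 2$ block is therefore
\[
\begin{pmatrix} 1-\eta_1\eta_2\sigma_i^2 & -\eta_2\sigma_i \\ \eta_2\sigma_i & 1-\eta_1\eta_2\sigma_i^2\end{pmatrix},
\]
a scaled rotation of the form $\bigl(\begin{smallmatrix} a & -b\\ b & a\end{smallmatrix}\bigr)$. Its operator norm is exactly $\sqrt{a^2+b^2}=\sqrt{(1-\eta_1\eta_2\sigma_i^2)^2+\eta_2^2\sigma_i^2}$, with no cross term whatsoever. The ``main obstacle'' you anticipate is an artifact of mis-assigning the stepsizes; neither the weighted-norm trick nor the symmetrized-eigenvalue computation is needed, and in fact applied to your (incorrect) matrix they would not produce the stated $\rho$, which depends on $\eta_2^2$ alone in the additive term.

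Two smaller points. First, your convexity-in-$s=\sigma_i^2$ argument for the endpoint maximum is fine and is equivalent to the monotonicity argument the paper uses. Second, the claim that the two stepsize hypotheses ``guarantee $\rho_i<1$ at every singular value'' is false; the paper itself notes right after the theorem that these conditions are necessary but not sufficient. The theorem only asserts the bound $\rho^{2t}$, and $\rho<1$ must be checked separately (as in Corollary~\ref{cor:bilinear}).
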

The conditions for $\eta_1$ and $\eta_2$ in Theorem~\ref{th:eg_bilinear} are necessary, but not sufficient. To guarantee convergence, one needs to have $\rho<1$ and below we provide two such examples.
\begin{corollary}\label{cor:bilinear}
    Under the same assumption as in Theorem~\ref{th:eg_bilinear}, consider two choices of stepsizes:
    \begin{enumerate}
    		\item if $\eta_1=\eta_2 = \nicefrac{1}{(\sqrt{2}\sigma_{\max}(\mB))}$ we get
    \begin{align*}
        &\|x^t - x^*\|^2 + \|y^t - y^*\|^2\\
        &\le \left(1 - \frac{\sigma_{\min}(\mB)^2}{6\sigma_{\max}(\mB)^2} \right)^{2t}(\|x^0 - x^*\|^2 + \|y^0 - y^*\|^2),
    \end{align*}
    		\item if $\sigma_{\min}(\mB) > 0$, and $\eta_1 = \nicefrac{\kappa}{(\sqrt{2}\sigma_{\max}(\mB)^2)}$, $\eta_2 = \nicefrac{1}{(\sqrt{2}\kappa \sigma_{\max}(\mB)^2)}$ with $\kappa \eqdef \nicefrac{\sigma_{\min(\mB)}^2}{\sigma_{\max(\mB)}^2}$, then the rate is
    \begin{align*}
        &\|x^t - x^*\|^2 + \|y^t - y^*\|^2 \\
        &\le \left(1 - \frac{\sigma_{\min}(\mB)^2}{4\sigma_{\max}(\mB)^2} \right)^{2t}(\|x^0 - x^*\|^2 + \|y^0 - y^*\|^2).
    \end{align*}
	\end{enumerate}    
\end{corollary}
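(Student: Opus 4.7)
The plan is to derive the corollary by direct substitution of the prescribed stepsizes into the closed form for $\rho$ given in Theorem~\ref{th:eg_bilinear}. In each case I will (i) confirm the admissibility conditions $\eta_2\sigma_{\max}(\mB) < 1$ and $\eta_1\eta_2\sigma_{\max}(\mB)^2 < 2$ demanded by the theorem, and (ii) bound each of the two arguments of the max defining $\rho$ by the claimed contraction factor. Writing $\Delta_0 \eqdef \|x^0-x^*\|^2 + \|y^0-y^*\|^2$, the bound $\|x^t-x^*\|^2 + \|y^t-y^*\|^2 \le \rho^{2t}\Delta_0$ from Theorem~\ref{th:eg_bilinear} then turns any estimate $\rho \le 1-c$ into the stated geometric rate $(1-c)^{2t}\Delta_0$.

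For case~1, let $u \eqdef \sigma_{\min}(\mB)^2/\sigma_{\max}(\mB)^2 \in [0,1]$. With $\eta_1 = \eta_2 = 1/(\sqrt{2}\sigma_{\max}(\mB))$ I get $\eta_1\eta_2\sigma_{\max}(\mB)^2 = \eta_2^2\sigma_{\max}(\mB)^2 = 1/2$, while the analogous products with $\sigma_{\min}(\mB)$ both equal $u/2$. Substitution collapses $\rho$ to $\max\{\,3/4,\ 1 - u/2 + u^2/4\,\}$. The bound $\rho \le 1 - u/6$ then reduces to two elementary inequalities valid on $[0,1]$: $3/4 \le 1 - u/6$ since $u \le 3/2$, and $u^2/4 \le u/3$ since $u \le 4/3$. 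Raising to the $2t$-th power yields the first claim.

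For case~2 I would proceed the same way: substitute the prescribed stepsizes, expand each argument of the max as a polynomial in $\kappa$, and verify that each is at most $1 - \kappa/4$. The purpose of the asymmetric scaling by $\kappa$ and $1/\kappa$ is to balance the two arguments of the max so that both become of order $1 - \Theta(\kappa)$; this rebalancing is exactly what lets the prefactor improve from $1/6$ in case~1 to $1/4$ here. Each resulting polynomial inequality should again collapse to a statement of the form $\kappa \le 1$. The main chore, rather than a conceptual obstacle, is the bookkeeping in case~2: keeping the powers of $\kappa$ and of $\sigma_{\max}(\mB)$ straight while expanding the two max arguments and extracting the dominant term. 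All of the analytic content has already been absorbed into Theorem~\ref{th:eg_bilinear}, so the corollary itself is a pure algebraic verification.
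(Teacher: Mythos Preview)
Your proposal is correct and follows essentially the same route as the paper: substitute the prescribed stepsizes into the expression for $\rho$ from Theorem~\ref{th:eg_bilinear} and bound the result by the claimed contraction factor. The paper's only shortcut is to observe that $(1-x^2)^2+x^2$ is monotonically decreasing on $(0,1/\sqrt{2})$, so the max defining $\rho$ is attained at $\sigma_{\min}(\mB)$ and only that term needs to be bounded, whereas you bound both arguments separately; for case~2 both you and the paper leave the analogous verification implicit.
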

If we denote $\kappa \eqdef \frac{\sigma_{\min(\mB)}^2}{\sigma_{\max(\mB)}^2}$ as in~\citep{mokhtari2019unified}, then the complexity in both cases is $O(\kappa \log\frac{1}{\varepsilon})$. However, we provide this result for potentially different stepsizes to obtain new insights about how they should be chosen. One can see, in particular, that choosing a huge $\eta_1$ is possible if $\eta_2$ is chosen small, but not vice versa.

\section{Nonconvex extragradient}
Since the objective of neural networks is not convex, it is desirable to have a
guarantee for convergence that would not assume operator
monotonicity. Alas, there is almost no theory even for nonconvex
minimax problems and full gradient updates as even the notion of
stationarity becomes tricky. Therefore, in this section we only
discuss the method performance when minimizing loss function.

\begin{figure*}[t]
    \centering
      \begin{minipage}[b]{0.49\textwidth}
    \includegraphics[width=1\linewidth]{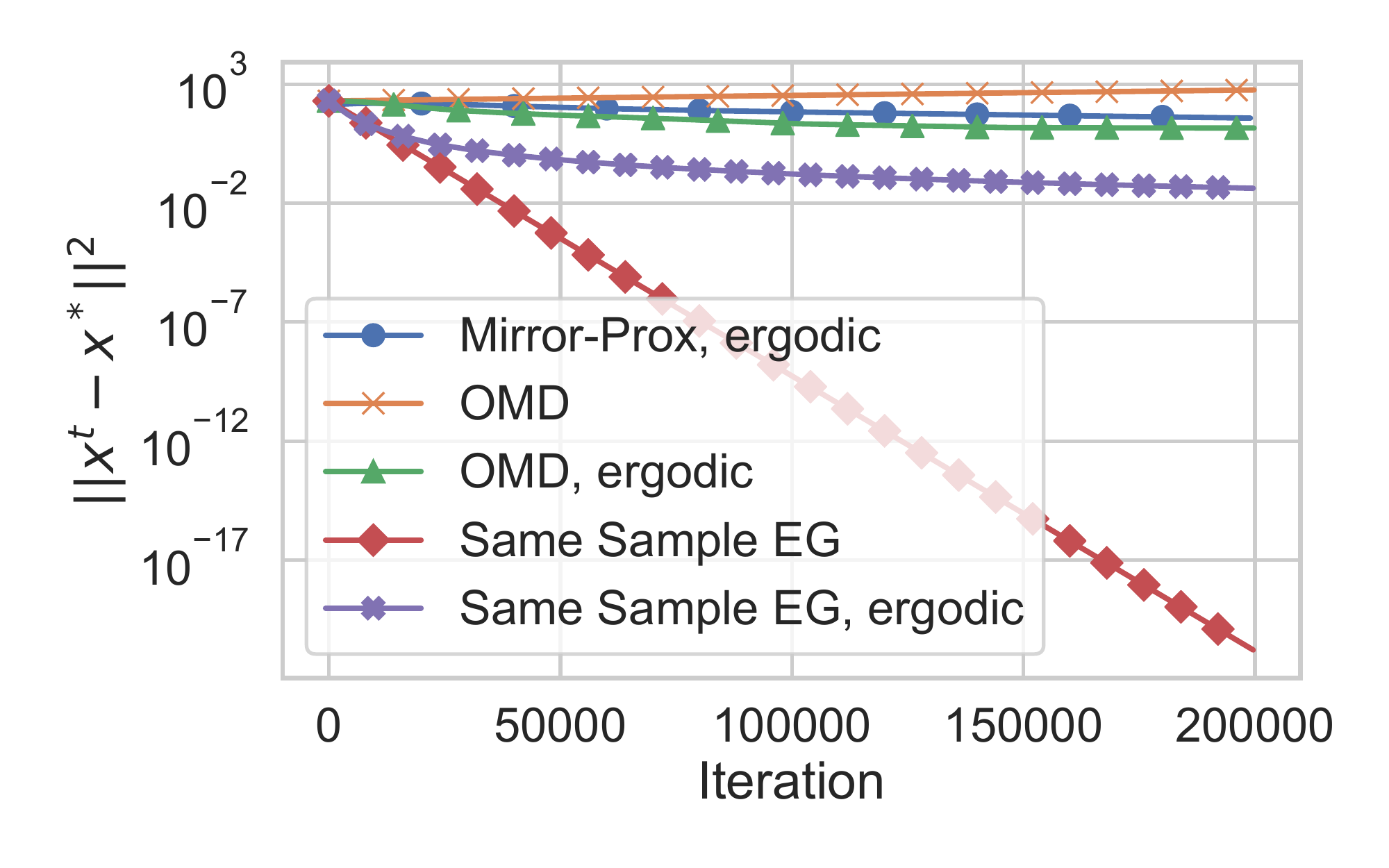}
\end{minipage}
\begin{minipage}[b]{0.49\textwidth}
    \includegraphics[width=1\linewidth]{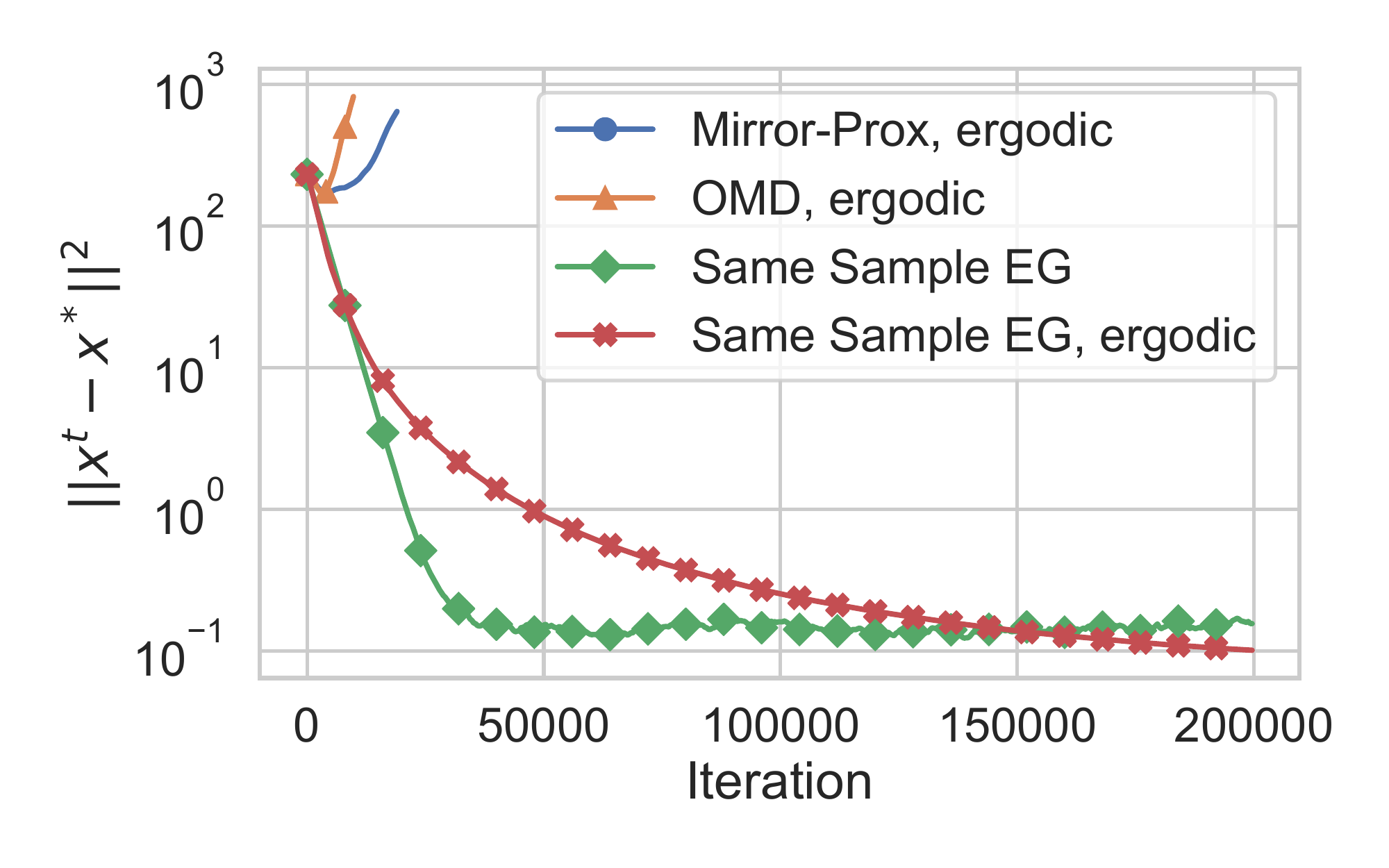}
\end{minipage}
\caption{Left: comparison of using independent samples and averaging as suggested by~\citep{juditsky2011solving} and the same sample as proposed in this work. The problem here is the sum of randomly sampled matrices $\min_x\max_y\sum_{i=1}^nx^\top \mB_i y$. Since at point $(x^*, y^*)$ the noise is equal $0$, the convergence of Algorithm~\ref{alg:eg_vi} is linear unlike the slow rates of~\citep{juditsky2011solving} and~\citep{gidel2018variational}. 'EGm' is the version with negative momentum~\citep{gidel2018negative} equal $\beta=-0.3$. Right: bilinear example with linear terms.}
\label{fig:bilinear}
\end{figure*}

\begin{figure*}[t]
	\centering
	\includegraphics[scale=0.09]{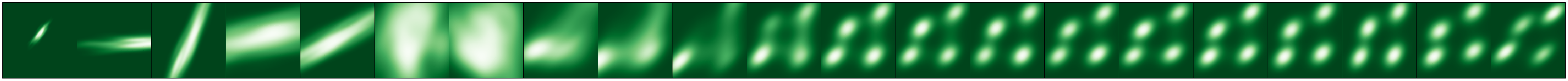}
	\includegraphics[scale=0.09]{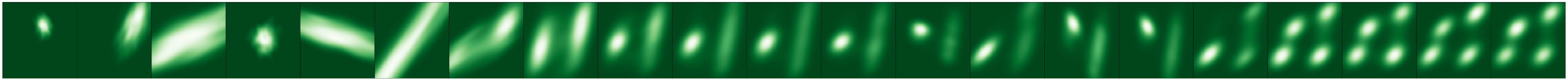}
	\includegraphics[scale=0.09]{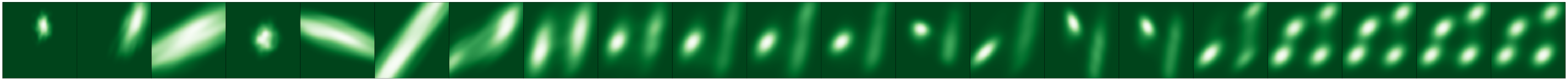}
	\caption{Top line: extragradient with the same sample. Middle line: gradient descent-ascent. Bottom line: extragradient with different samples. Since the same seed was used for all methods, the former two methods performed extremely similarly, although when zooming it should be clear that their results are slightly different.}
	\label{fig:four_gaussians}
\end{figure*}

Formally, the problem that we consider here is
\begin{align}\label{eq:erm}
	\min_{x} \EE_\xi f(x; \xi),
\end{align}
where $f$ is a smooth bounded from below and potentially nonconvex
function. To show convergence, we need the following standard
assumption.
\begin{assumption}
	There exists a constant $\sigma > 0$ such that for all $x$ it holds
	\begin{align*}
		\EE \|\nabla f(x; \xi) - \nabla f(x)\|^2
		\le \sigma^2.
	\end{align*}
\end{assumption}
Then, we are able to show that the method converges to a local
minimum.
\begin{theorem}\label{th:nonconvex}
	Choose $\eta\le \frac{1}{4L}$ and apply extragradient to~\eqref{eq:erm}. Then, its iterates satisfy
	\begin{align*}
		\EE\|\nabla f(\hat x^t)\|^2
		\le \frac{5}{\eta t}(f(x^{0}) - f^*) + 11\eta L\sigma^2,
	\end{align*}
	where $\hat x^t$ is sampled uniformly from $\{x^0, \dotsc,
        x^{t-1}\}$ and $f^* = \inf_x f(x)$.
    \end{theorem}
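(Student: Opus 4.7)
The plan is to use the standard descent-lemma machinery while exploiting the crucial fact that the extragradient uses the \emph{same} sample $\xi^t$ in both stages. Write $g^t \eqdef \nabla f(x^t;\xi^t)$, $y^t = x^t - \eta g^t$, and $h^t \eqdef \nabla f(y^t;\xi^t)$, so that $x^{t+1} = x^t - \eta h^t$. Since $f$ is $L$-smooth (which follows from the almost-sure $L$-Lipschitzness of $\nabla f(\cdot;\xi)$), the descent lemma gives
\begin{align*}
f(x^{t+1}) \le f(x^t) - \eta \<\nabla f(x^t), h^t> + \tfrac{L\eta^2}{2}\|h^t\|^2.
\end{align*}

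First, I would split $h^t = g^t + (h^t - g^t)$. Because $x^t$ is $\mathcal{F}_t$-measurable and $\xi^t$ is drawn independently, $\EE[g^t\mid\mathcal{F}_t] = \nabla f(x^t)$, which extracts the clean descent term $-\eta\|\nabla f(x^t)\|^2$. The residual $\|h^t - g^t\| \le L\|y^t - x^t\| = \eta L\|g^t\|$ is controlled purely by Lipschitzness, which is exactly the same-sample trick that also powers Theorem~\ref{th:approx}. Applying Cauchy--Schwarz and Young's inequality to the cross-term yields
\begin{align*}
-\eta\,\EE\<\nabla f(x^t), h^t - g^t> \le \tfrac{\eta}{4}\|\nabla f(x^t)\|^2 + \eta^3 L^2\,\EE\|g^t\|^2.
\end{align*}
For the quadratic $\EE\|h^t\|^2$, I would expand $\|h^t\|^2 \le 2\|\nabla f(x^t)\|^2 + 4\|h^t-g^t\|^2 + 4\|g^t-\nabla f(x^t)\|^2$, then apply the same Lipschitz bound together with the variance assumption $\EE\|g^t - \nabla f(x^t)\|^2 \le \sigma^2$ and $\EE\|g^t\|^2 \le \|\nabla f(x^t)\|^2 + \sigma^2$ to obtain $\EE\|h^t\|^2 \le C_1\|\nabla f(x^t)\|^2 + C_2\sigma^2$ with constants depending on $\eta L$.

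Combining everything, taking full expectations, and collecting terms produces
\begin{align*}
\EE f(x^{t+1}) \le \EE f(x^t) - \eta\bigl(1 - \tfrac14 - O(\eta L)\bigr)\EE\|\nabla f(x^t)\|^2 + O(L\eta^2)\sigma^2.
\end{align*}
The choice $\eta \le \nicefrac{1}{4L}$ makes the $O(\eta L)$ slack small enough to ensure the coefficient of $\EE\|\nabla f(x^t)\|^2$ is bounded below by $\eta/5$ (up to constants matching those in the theorem). Telescoping from $k=0$ to $t-1$, using $f(x^t) \ge f^*$, and invoking the uniform sampling of $\hat x^t \in \{x^0,\dots,x^{t-1}\}$ converts the sum into $\EE\|\nabla f(\hat x^t)\|^2$, producing the claimed bound $\tfrac{5}{\eta t}(f(x^0) - f^*) + 11\eta L \sigma^2$.

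The main obstacle is handling $h^t = \nabla f(y^t;\xi^t)$, for which neither $\EE[h^t\mid\mathcal{F}_t] = \nabla f(y^t)$ nor $\EE\|h^t - \nabla f(y^t)\|^2 \le \sigma^2$ holds, because $y^t$ is correlated with $\xi^t$. The resolution is the same-sample trick: every term involving $h^t$ gets reduced to a term involving $g^t$ plus a residual bounded by Lipschitzness, so that the only variance we ever need is the one at $x^t$, which is independent of $\xi^t$. This is precisely where the extragradient structure (as opposed to plain SGD or the Mirror-Prox-style two-sample scheme) pays off, and careful tracking of $\eta L$ in the residual terms is what ensures the noise floor scales as $\eta L\sigma^2$ rather than a constant multiple of $\sigma^2$.
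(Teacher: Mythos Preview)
Your proposal is correct and follows essentially the same route as the paper: descent lemma, replace $\nabla f(x^t)$ by $g^t$ in the cross term via unbiasedness, control $h^t-g^t$ by almost-sure Lipschitzness (the same-sample trick), bound $\EE\|h^t\|^2$ through $\EE\|g^t\|^2\le\|\nabla f(x^t)\|^2+\sigma^2$, and telescope. The only differences are cosmetic---your Young's-inequality weights and the three-term expansion of $\|h^t\|^2$ differ slightly from the paper's two-term split $\|h^t\|^2\le 2\|h^t-g^t\|^2+2\|g^t\|^2$, but both yield the stated constants under $\eta\le\nicefrac{1}{4L}$.
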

\begin{corollary}
	If  we choose $\eta = \Theta\left(\nicefrac{1}{(L\sqrt{t})} \right)$, then the rate is $O\left(\nicefrac{(f(x^0) - f^*)}{\sqrt{t}} + \nicefrac{\sigma^2}{\sqrt{t}}\right)$, which is the same as the rate of SGD under our assumptions.
\end{corollary}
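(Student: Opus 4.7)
The plan is to treat this corollary as a direct substitution into the bound already proved in Theorem~\ref{th:nonconvex}, namely
\[
\EE\|\nabla f(\hat x^t)\|^2 \le \frac{5}{\eta t}(f(x^0)-f^*) + 11\eta L\sigma^2,
\]
and read off the two resulting terms. I would not re-open the one-step descent argument at all.

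First I would pin down the constant hidden inside the $\Theta$. Write $\eta = c/(L\sqrt{t})$ with some absolute constant $c>0$. The hypothesis of Theorem~\ref{th:nonconvex} requires $\eta\le 1/(4L)$, which reduces to $c/\sqrt{t}\le 1/4$; choosing $c\le 1/4$ ensures admissibility for every $t\ge 1$, so no restriction on $t$ is needed.

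Next I would plug this $\eta$ into each term of the theorem's bound. The optimization-gap term becomes
\[
\frac{5}{\eta t}(f(x^0)-f^*) = \frac{5L}{c\sqrt{t}}(f(x^0)-f^*) = \cO\!\left(\frac{f(x^0)-f^*}{\sqrt{t}}\right),
\]
while the noise term becomes
\[
11\eta L\sigma^2 = \frac{11 c\,\sigma^2}{\sqrt{t}} = \cO\!\left(\frac{\sigma^2}{\sqrt{t}}\right).
\]
Summing the two yields the claimed $\cO\big((f(x^0)-f^*)/\sqrt{t} + \sigma^2/\sqrt{t}\big)$ rate. I would close with a one-line remark that this matches the standard nonconvex SGD rate under bounded-variance stochastic gradients, justifying the final sentence of the corollary.

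There is no real obstacle here; the only point deserving a second of care is verifying that the proposed stepsize satisfies the $\eta\le 1/(4L)$ constraint of Theorem~\ref{th:nonconvex}, which it does uniformly in $t$ for $c\le 1/4$.
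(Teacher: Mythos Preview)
Your proposal is correct and matches the paper's approach: the corollary is stated without a separate proof in the paper, being an immediate consequence of substituting $\eta=\Theta(1/(L\sqrt{t}))$ into the bound of Theorem~\ref{th:nonconvex}, exactly as you do. Your check that $c\le 1/4$ guarantees $\eta\le 1/(4L)$ for all $t\ge 1$ is the only detail worth noting, and you handle it properly.
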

The statement of the theorem almost coincides with that of SGD, see for instance~\citep{ghadimi2013stochastic}. This suggests that extragradient in most cases should not be seen as an alternative to SGD. We also provide a simple experiment with training Resnet-18~\citep{he2016deep} on Cifar10~\citep{krizhevsky2009learning} in Appendix~\ref{sec:erm_exp}, which gives a similar message.

\section{Experiments}

\subsection{Bilinear minimax}
In this experiment, we generated a matrix with entries from standard normal distribution and dimensions 200. Since we did not observe much difference when changing the matrix size, we provide only one run in Figure~\ref{fig:bilinear}. The results are very encouraging and show the superiority of the proposed approach on this problem. We provide two cases, with zero noise at the optimum and non-zero noise. In the latter case, only our method did not diverge. 

When the noise at the optimum is zero, this is mostly like deterministic case for our method, but for the rest it is a difficult problem. On the other hand, when the noise is not equal 0 at the solution, the ergodic convergence of our method is faster, just as predicted by Theorem~\ref{th:ergodic_conv}.

\begin{figure*}[t]
\centering
{\includegraphics[width=1\linewidth]{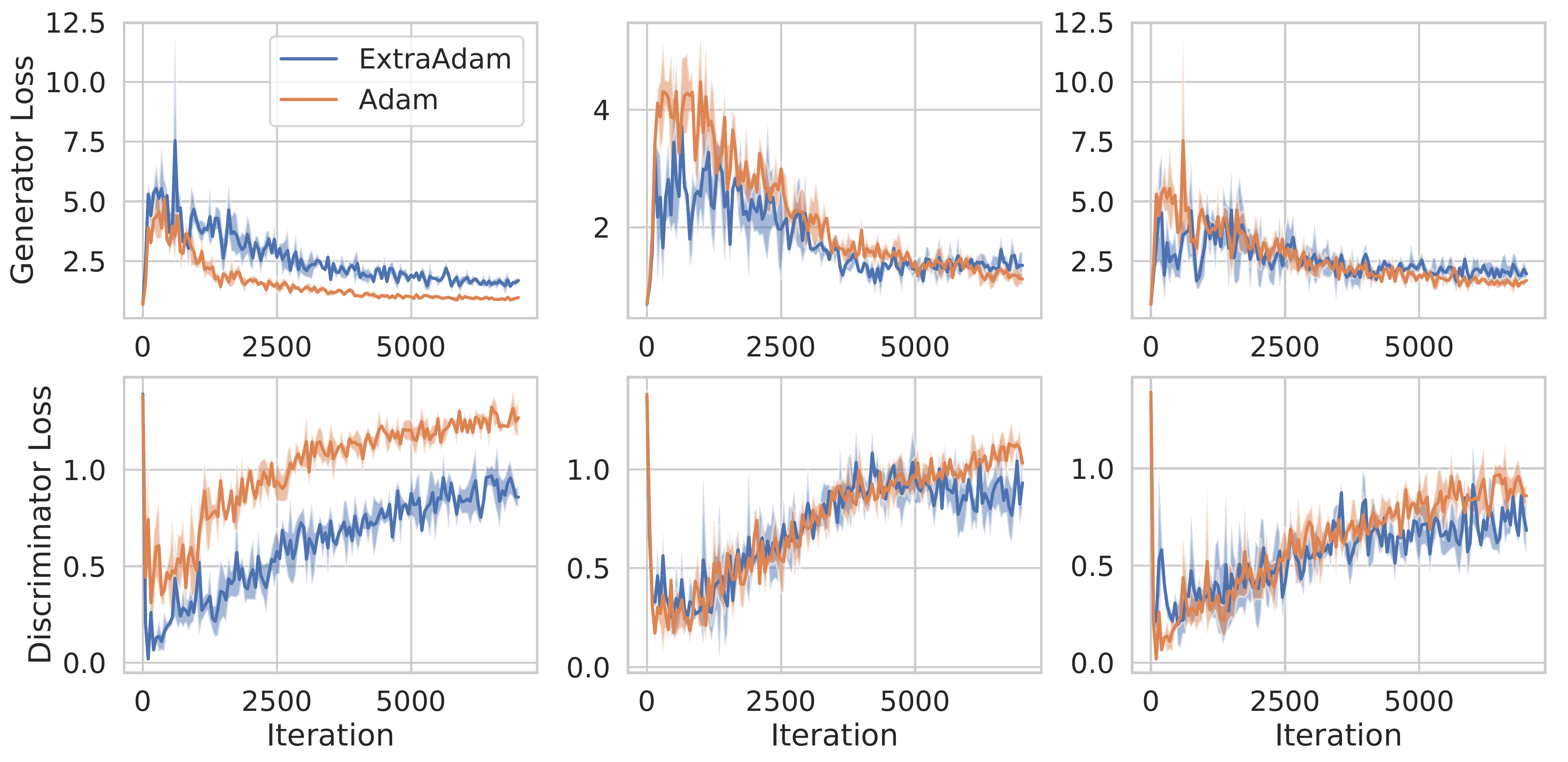}}
 \caption{The columns differ in step sizes for generator and discriminator: 1) ($10^{-4}$, $10^{-4}$), 2) ($5\cdot 10^{-5}$, $5\cdot 10^{-5}$), 3) ($10^{-4}$, $5\cdot 10^{-5}$).
  In the top row, we show the generator loss and in the bottom row that of discriminator.} \label{fig:Adam_ExtraAdam}
\end{figure*}

\subsection{Generating mixture of Gaussians}
Here we compare gradient descent-ascent as well as Mirror-Prox to our method on the task of learning mixture of 4 Gaussians. We provide the evolution of the process in Figure~\ref{fig:four_gaussians}, although we note that the process is rather unstable and all results should be taken with a grain of salt.

To our surprise, negative momentum was rarely helpful and even positive momentum sometimes was giving significant improvement. We suspect that this is due to the different roles of generator and discriminator, but leave further exploration for future work. 

The details of the experiment are as follows. For generator we use neural net with 2 hidden layers of size 16 and tanh activation function and output layer with size 2 and no activation function, which represents coordinates in 2D. Generator uses standard Gaussian vector of size 16 as an input.
For discriminator we use neural net with input layer of size 2, which takes a point from 2D, 2 hidden layers of size 16 and tanh activation function and output layer with size 1 and sigmoid activation function, which represents probability of input point to be sampled from data distribution.
We choose the same stepsize $5\cdot 10^{-3}$ for all methods, which is close to maximal possible stepsize under which the methods rarely diverge.

\begin{table*}[!htb]
    \begin{minipage}{.5\linewidth}
      \centering
        \begin{tabular}{c}
        \hline
        \textbf{Generator}                                     \\ \hline
        \textit{Input}: $z \in \mathbb{R}^{100} \sim \mathcal{N}(0, I)$ \\ \hdashline
        Embedding layer for the label                          \\
        Linear (110 $\to$ 256)                                 \\
        LeakyReLU (negative slope: 0.2)                        \\
        Linear (256 $\to$ 512)                                 \\
        LeakyReLU (negative slope: 0.2)                        \\
        Linear (512 $\to$ 1024)                                \\
        LeakyReLU (negative slope: 0.2)                        \\
        Linear (1024 $\to$ 784)                                \\
        \textit{Tanh}($\cdot$)                                  \\ \hline
        \end{tabular}
    \end{minipage}%
    \begin{minipage}{.5\linewidth}
      \centering
        \begin{tabular}{c}
        \hline
        \textbf{Discriminator}                                 \\ \hline
        \textit{Input}: $x \in \mathbb{R}^{1 \times 28 \times 28} $ \\ \hdashline
        Embedding layer for the label                          \\
        Linear (794 $\to$ 1024)                                \\
        LeakyReLU (negative slope: 0.2)                        \\
        Dropout ($p$=0.3)                                      \\
        Linear (1024 $\to$ 512)                                \\
        LeakyReLU (negative slope: 0.2)                        \\
        Dropout ($p$=0.3)                                      \\
        Linear (512 $\to$ 256)                                 \\
        LeakyReLU (negative slope: 0.2)                        \\
        Dropout ($p$=0.3)                                      \\
        Linear (1024 $\to$ 784)                                \\
        \textit{Sigmoid}($\cdot$)                              \\ \hline
        \end{tabular}
    \end{minipage} 
    \caption{Architectures used for our experiments on \emph{Fashion MNIST}.}
    \label{architectures}
\end{table*}

\subsection{Comparison of Adam and ExtraAdam}

Unfortunately, pure extragradient did not perform extremely well on big datasets, so for the Fashion MNIST and Celeba experiments we used adaptive stepsizes as in Adam~\citep{kingma2014adam}.

In the first set of experiments, we compared the performance of ExtraAdam~\citep{gidel2018variational} and Adam in a Conditional GAN~\citep{mirza2016conditionalgan} setup on Fashion MNIST~\citep{xiao2017fashion} dataset. The generator and discriminator were simple feedforward networks (detailed architectures description in Table~\ref{architectures}). Optimizers were run with mini-batch size of 64 samples, no weight decay and $\beta_1=0.5, \beta_2=0.999$. One iteration of ExtraAdam was counted as two due to a double gradient calculation.
The results (mean and variance) are depicted in Figure~\ref{fig:Adam_ExtraAdam} and were obtained using 3 runs with different seeds.
One can see that extragradient is slower because of the need to compute twice more gradients.

We suspect that Adam is faster partially due to that the problem's structure is something more specific than just a variational inequality. One validation of this guess is that in~\citep{gidel2018negative}, the networks were trained with negative momentum only on discriminator, while generator was trained with constant momentum $+0.5$. Another reason we make this conjecture is that in~\citep{metz2016unrolled} there was proposed a method that can be seen as a variant of extragradient, in which parameters of only one network requires extra steps.

In the second experiment, following~\citep{chavdarova2019reducing}, we trained Self Attention GAN~\citep{zhang2018self}. We note that the loss was generally an ambiguous metric of method comparison, so we provide the Inception score~\citep{salimans2016improved}\footnote[1]{We used implementation from \href{https://github.com/sbarratt/inception-score-pytorch}{this} GitHub repository.} in Figure~\ref{is_celeba} as performance measure for image synthesis. Besides, samples generated after training for two epochs are provided in Figure~\ref{fig:celeba} in the Appendix. 

The work~\citep{gidel2018negative} suggests using negative momentum to
improve game dynamics and achieve faster convergence of the
iterates. We consider using two types of momentum together: $\beta_1$
in the first step and $\beta_2$ in the second, i.e., we use $y^t = x^t - \eta_1 F(x^t; \xi^t) + \beta_1(x^t - x^{t-1})$ and $x^{t+1} = x^t - \eta_2 F(y^t; \xi^t) + \beta_2(x^t - x^{t-1})$.
 Detailed investigation
on bilinear problems shows that $\beta_1$ can be chosen to be positive
and $\beta_2$ should rather be negative. Intuitively, positive
$\beta_1$ allows the method to look further ahead, while negative
$\beta_2$ compensates for inaccuracy in the approximation of implicit
update. In Appendix~\ref{sec:momentum}, we discuss it in more
details. 

The results (mean and variance) are depicted in Figure~\ref{fig:Adam_ExtraAdam} and were obtained using 3 runs with different seeds.

\begin{figure}[H]
\centering
{\includegraphics[width=0.9\linewidth]{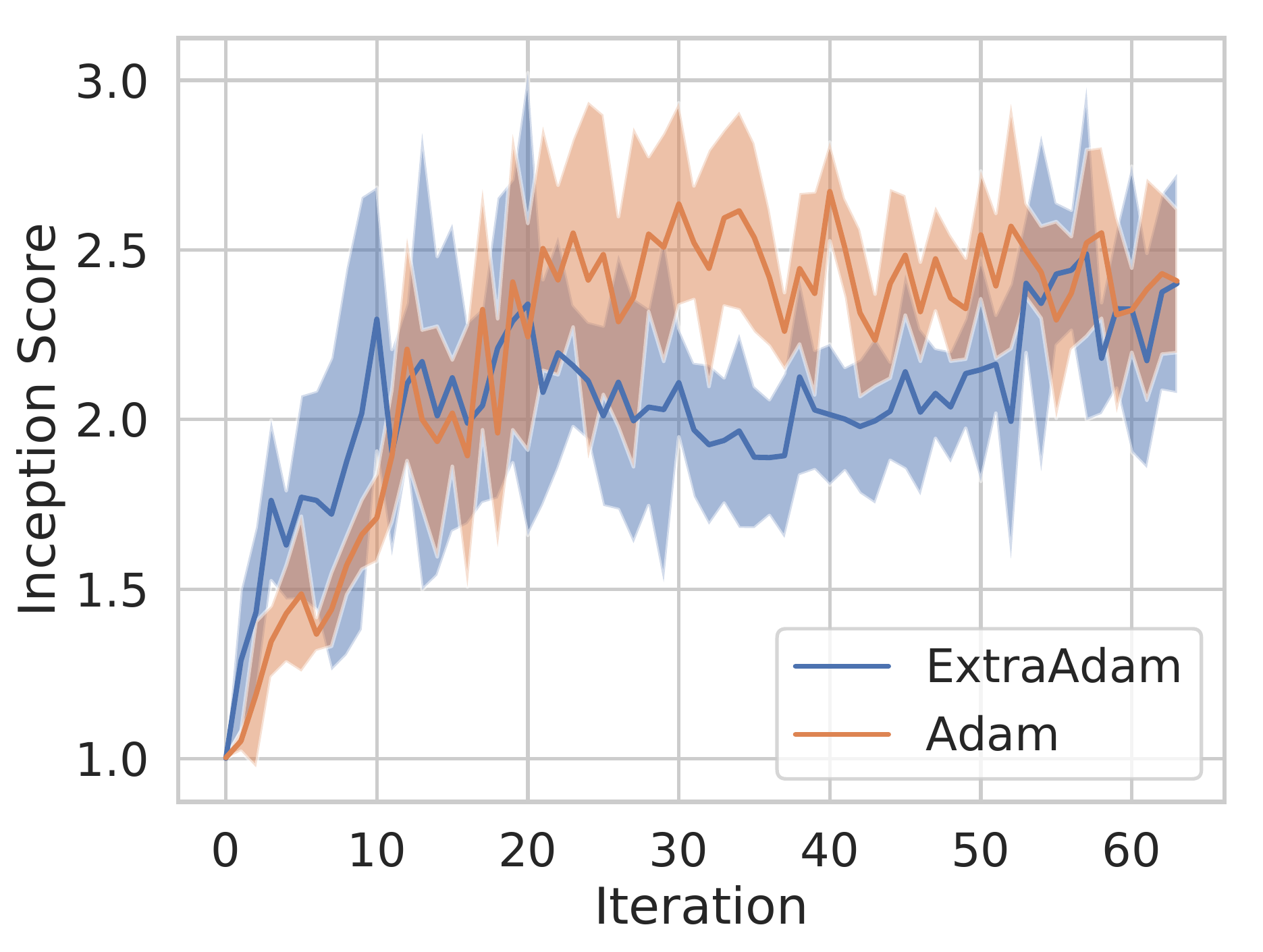}}
\caption{Inception score (mean and variance obtained by 5 runs) computed every 50 iterations during the training process on \emph{CelebA} dataset for 2 epochs.}
\label{is_celeba}
\end{figure}

\begin{figure*}[h]
\centering
\begin{subfigure}[t]{0.43\textwidth}
{\includegraphics[trim={3mm 0 3mm 0},clip,width=1\textwidth]{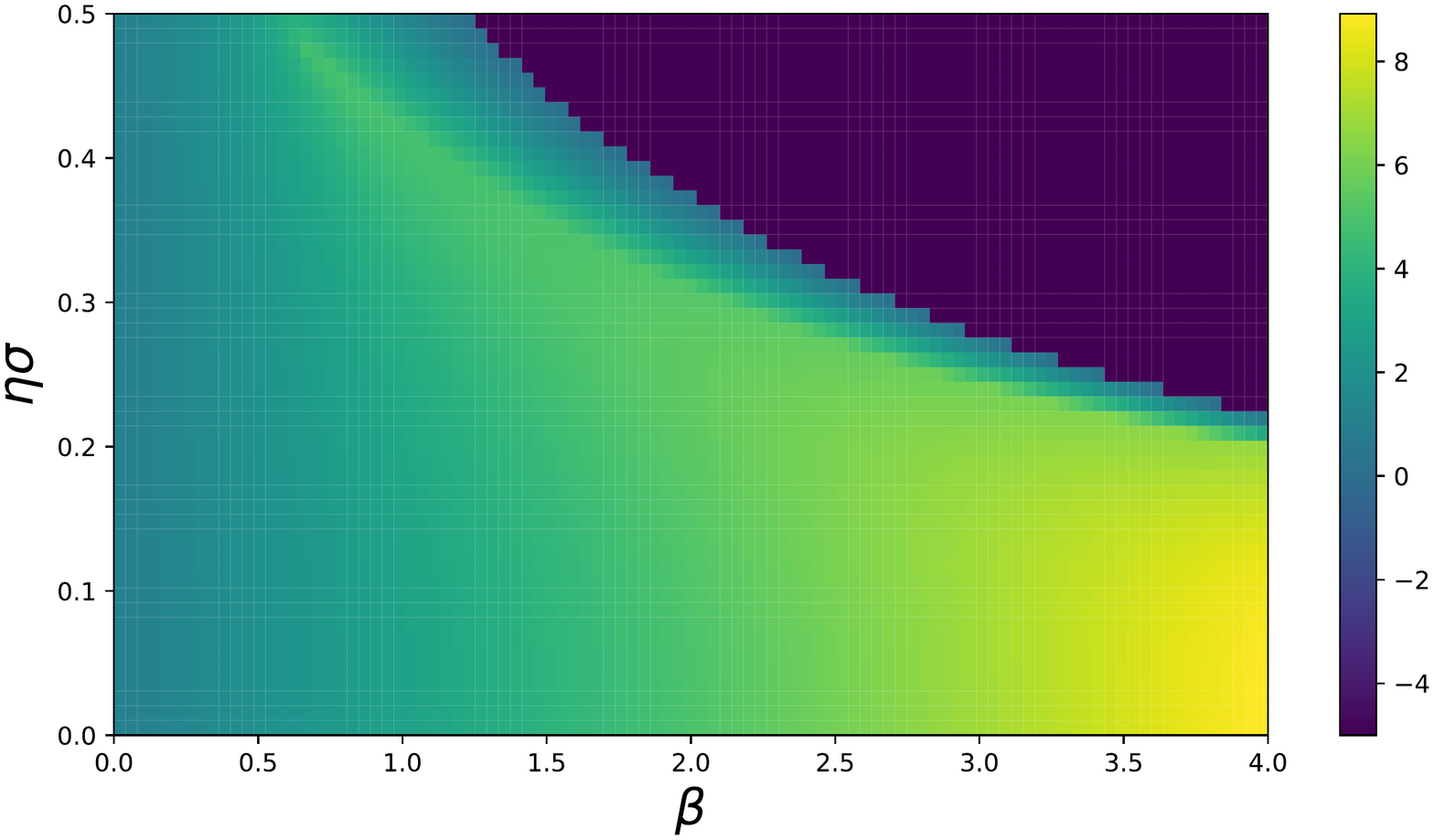}}
\caption{$\eta_1=\eta_2$, $\beta_2=0$, $\beta=\beta_1$ is the $x$-axis, $\eta\sigma_i$ is the $y$-axis. The optimal value of $\beta_1$ depends on $\eta\sigma_i$ and only for small values is significantly bigger 0. The dark area is where the method diverges.\\}
\end{subfigure}
\hfill
\begin{subfigure}[t]{0.43\textwidth}
{\includegraphics[trim={3mm 0 3mm 0},clip,width=1\textwidth]{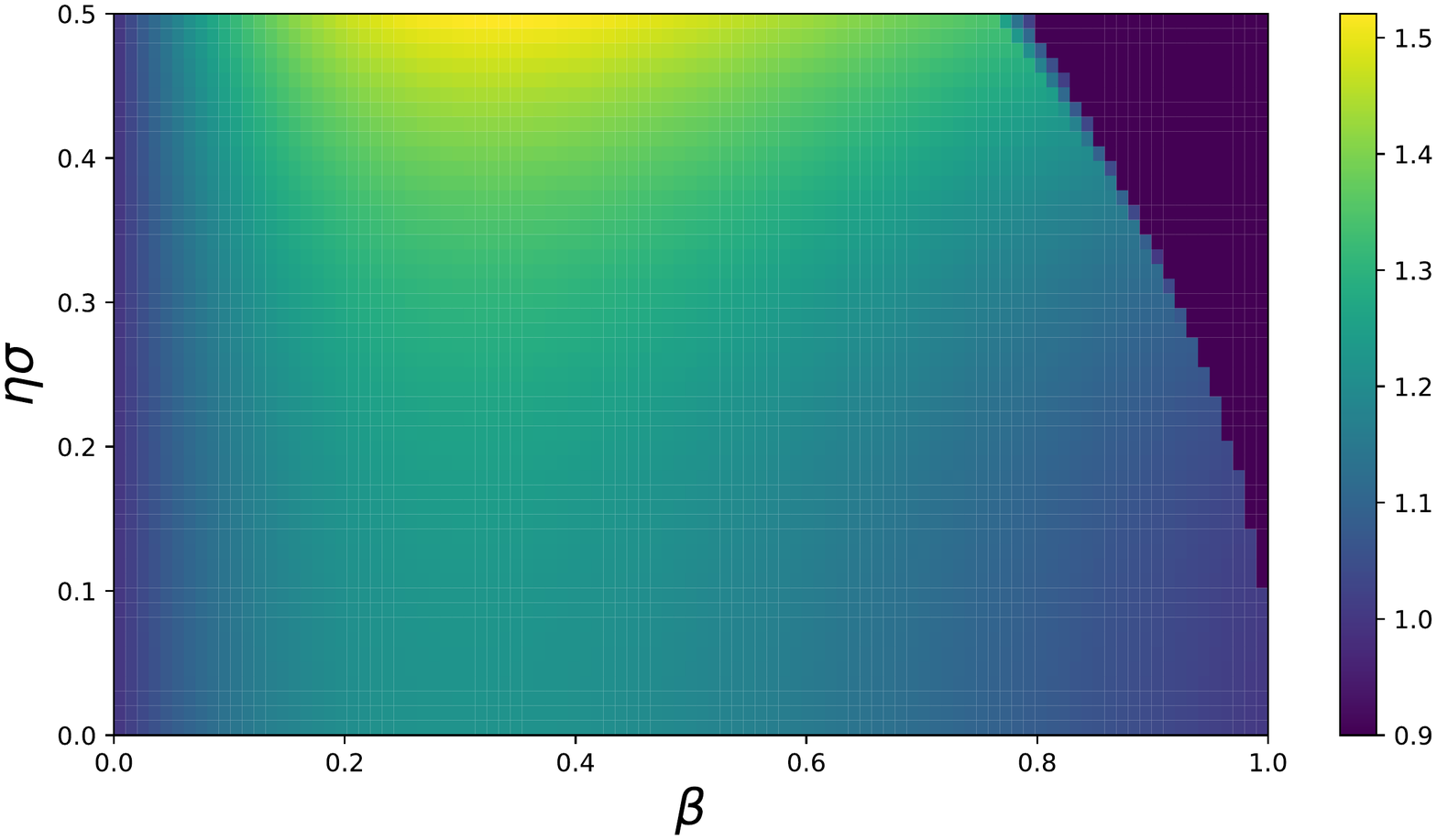}}
\caption{$\eta_1=\eta_2$, $\beta_1=0$, $\beta=-\beta_2$ (negative momentum) is the $x$-axis, $\eta\sigma_i$ is the $y$-axis. The optimal value of $\beta_2$ is always very close to $-0.3$. The dark area is where the method diverges.}
\end{subfigure}
\caption{Values of the spectral radius of the extragradient momentum matrix~\eqref{eq:momentum_matrix} for bilinear problems for different values of $\eta\sigma$ and $\beta$. The heat values is the multiplicative speed up from using $\beta>0$ compared to $\beta=0$, which we define as the ratio $\frac{\rho(\mT(\eta\sigma, \beta))}{\rho(\mT(\eta\sigma, 0))}$, where $\rho(\mA)$ is the spectral radius of a matrix $\mA$ for any $\mA$ and $\mT(\eta\sigma, \beta)$ is the value of matrix in the update under given $\eta\sigma$ and $\beta$, see~\eqref{eq:momentum_matrix} in Appendix~\ref{sec:momentum}.}
\label{fig:bilinear_heatmap}
\end{figure*}

\subsection{Discussion}
The bilinear example is very clear and the results that we obtained showed enough stability. However, the message from training GANs is very vague due to their well-known instability. We did not observe a significant impact of negative momentum on convergence speed or stability, but at the same time we mentioned that setting first momentum to 0 in Adam is important for the extra update to have impact. We believe that the bilinear problem in this situation is the best way to make conclusion, but we still aim to obtain new methods for GANs in future.

It is also worth mentioning that the actual loss functions used in GANs are typically nonsmooth due to the choice of loss functions. For instance, the popular WGAN formulation~\citep{arjovsky2017wasserstein} includes hinge loss. On top of that, neural networks themselves have nonsmooth activations such as ReLU and its variants. Therefore, it is an interesting direction to understand what happens when the assumptions typical to variational inequalities are violated.

\newpage

\bibliographystyle{plainnat}
\bibliography{extra_vi.bib}
\appendix
\clearpage
\onecolumn
\part*{Appendix: \Large ``Revisiting Stochastic Extragradient''}
\section{Proofs}
\textbf{Proof of Theorem~\ref{th:approx}}\\
We prove a more general version of the claim made in the main part, in particular we provide $O(\eta^k)$ bound for extragraident with $k$ steps. The precise claim is given below.
\begin{theorem}
	Let $F$ be an $L$-Lipschitz operator and define recursively $y_0 = x$ and $y_{m+1}\eqdef \prox{\eta g}{x - \eta F(y_m)}$ for $m=1,\dotsc, k$ and let $w\eqdef \prox{\eta g}{x - \eta F(w)}$ be the implicit update, where $\eta>0$ is any stepsize. Then,
	\begin{align*}
		\|w - y_k\|
		\le \eta^k L^k \|w - x\|.
	\end{align*}
\end{theorem}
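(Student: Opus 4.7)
The plan is to prove the bound by induction on $k$, using two standard facts: the proximal operator $\prox{\eta g}{\cdot}$ is nonexpansive (since $g$ is convex lower semi-continuous, so $\eta g$ is too), and $F$ is $L$-Lipschitz.

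For the base case $k=0$, we have $y_0 = x$ by definition, so $\|w - y_0\| = \|w - x\|$, which matches $\eta^0 L^0 \|w-x\|$ trivially.

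For the inductive step, suppose $\|w - y_m\| \le \eta^m L^m \|w - x\|$ and consider
\begin{align*}
    \|w - y_{m+1}\|
    = \|\prox{\eta g}{x - \eta F(w)} - \prox{\eta g}{x - \eta F(y_m)}\|.
\end{align*}
By nonexpansiveness of the prox, this is at most $\|\eta F(w) - \eta F(y_m)\| = \eta \|F(w) - F(y_m)\|$, which by the Lipschitz assumption is at most $\eta L \|w - y_m\|$. Chaining with the induction hypothesis yields the desired $\eta^{m+1} L^{m+1}\|w-x\|$.

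There is essentially no hard step here; the proof is a two-line recursion once one recalls that the prox is nonexpansive. The only thing worth remarking explicitly is why the implicit update $w$ exists and is unique: for $\eta L < 1$ the map $u \mapsto \prox{\eta g}{x - \eta F(u)}$ is a contraction (by the same two bounds used above), so $w$ is its unique fixed point by Banach's theorem; for larger $\eta$, existence must either be assumed or justified via monotonicity (the resolvent of $F + \partial g$), but the inductive bound itself does not require $\eta L < 1$ once $w$ is given. Recovering Theorem~\ref{th:approx} corresponds to taking $k=2$ and identifying $y_1 = y$, $y_2 = z$.
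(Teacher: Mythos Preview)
Your proof is correct and follows essentially the same approach as the paper: induction on $k$ with base case $y_0 = x$, and the inductive step via nonexpansiveness of $\prox{\eta g}{\cdot}$ followed by $L$-Lipschitzness of $F$. The additional remarks on existence of $w$ are not in the paper's proof but are reasonable observations.
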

\begin{proof}
	We show the claim by induction. For $k=0$ it holds simply because $y_0\eqdef x$. If it holds for $k - 1$, let us show it for $k$. By non-expansiveness of the proximal operator we have
	\begin{align*}
		\|w - y_{k}\|
		&= \|\prox{\eta g}{x - \eta F(w)} - \prox{\eta g}{x - \eta F(y_{k-1})}\| \\
		&\le \|x - \eta F(w) - (x - \eta F(y_{k-1}))\| \\
		&= \eta \|F(w) - F(y_{k-1})\| \\
		&\le \eta L \|w - y_{k-1}\| \\
		&\le \eta^{k} L^{k} \|w - x\|.
	\end{align*}
\end{proof}

\textbf{Proof of Theorem~\ref{th:linear_rate_str_mon}}

First, let us introduce the following lemma that will be very useful in our analysis.
\begin{lemma}\label{lem:prox-strong}
  Let $g$ be $\mu$--strongly convex and  $z = \prox{\eta g}{x}$. Then for all $y \in
  \R^d$ the following inequality holds:
  \begin{equation}
    \<z - x, y-z> \geq \eta \bigl(g(z)-g(y) + \frac{\mu}{2}\|z-y\|^2\bigr).
  \end{equation}
\end{lemma}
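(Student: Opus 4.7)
The plan is to start from the variational characterization of the proximal operator and extract a subgradient identity, then plug that into the strong convexity inequality for $g$. Concretely, $z = \prox{\eta g}{x}$ is by definition the unique minimizer of $u \mapsto g(u) + \frac{1}{2\eta}\|u-x\|^2$, so the first-order optimality condition (using the subdifferential sum rule, valid since the quadratic term is smooth) yields $0 \in \partial g(z) + \frac{1}{\eta}(z - x)$. Equivalently, the particular subgradient $h \eqdef \frac{x-z}{\eta}$ belongs to $\partial g(z)$.

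Next, I would invoke strong convexity of $g$ at the base point $z$ with the chosen subgradient $h \in \partial g(z)$. Applying Assumption~3 (with the roles of the points swapped so that $z$ plays the role of $y$ in that assumption and $y$ plays the role of $x$) gives
\begin{align*}
g(y) - g(z) - \<h, y - z> \geq \frac{\mu}{2}\|y - z\|^2.
\end{align*}
Substituting $h = \frac{x-z}{\eta}$ and multiplying through by $\eta > 0$ turns this into
\begin{align*}
\eta\bigl(g(y) - g(z)\bigr) - \<x - z, y - z> \geq \frac{\eta\mu}{2}\|y - z\|^2,
\end{align*}
which after moving the inner product to the other side and flipping its sign (using $\<x-z,y-z> = -\<z-x,y-z>$) is precisely the claimed inequality.

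There is essentially no hard step here; the only subtlety is being careful about (i) the existence of the specific subgradient $h = (x-z)/\eta$ at $z$, which requires noting that the proximal minimizer exists uniquely because $g + \frac{1}{2\eta}\|\cdot - x\|^2$ is proper, lsc, and strongly convex (so the minimum is attained and the subdifferential at the minimizer is nonempty), and (ii) the sign conventions when rearranging. Neither is a real obstacle, so the whole argument fits in a few lines once the optimality condition is written down.
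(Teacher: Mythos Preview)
Your proof is correct and follows essentially the same approach as the paper: extract the subgradient $h=(x-z)/\eta\in\partial g(z)$ from the first-order optimality condition for the proximal minimizer, then apply the strong convexity inequality at the base point $z$ and rearrange. The paper's write-up is just a terser version of what you wrote.
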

\begin{proof}
	The lemma easily follows from the definitions. Indeed, since
	\begin{align*}
		z
		\eqdef \arg\min_u \{\eta g(u) + \frac{1}{2}\|u - x\|^2\},
	\end{align*}
	we have necessary optimality condition
        $0 \in \eta \partial g(z) + (z - x)$. Thus, by the definition
        of a subdifferential and by strong convexity,
        \[\eta (g(y)-g(z))\geq \<x-z,y-z> + \frac{\eta
                \mu}{2}\|z-y\|^2\]
        and the proof is complete.
\end{proof}
In addition, let us also separately state how we are going to deal with the update variance.
\begin{lemma}\label{lem:eg_variance}
	Let $F(\cdot; \xi)$ be almost surely monotone and assume that point $x$ is such that $\sigma_x^2 \eqdef \EE \|F(x; \xi)- F(x)\|^2 < +\infty$, i.e., the variance of $F$ at $x$ is bounded. Then,
	\begin{align*}
		\EE \<F(x) - F(x; \xi^t), y^t - x>
		\le \eta\sigma_x^2 +  \frac{1}{4\eta}\EE\|y^t - x^t\|^2.
	\end{align*}
\end{lemma}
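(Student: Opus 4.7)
The plan is to decompose $y^t - x = (y^t - x^t) + (x^t - x)$ and handle the two pieces separately. Concretely, write
\begin{align*}
\<F(x) - F(x; \xi^t), y^t - x>
= \<F(x) - F(x; \xi^t), y^t - x^t> + \<F(x) - F(x; \xi^t), x^t - x>.
\end{align*}

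For the second summand, I would use the independence structure of the algorithm: $x^t$ is measurable with respect to $\xi^0, \dotsc, \xi^{t-1}$ and is therefore independent of $\xi^t$. Conditioning on the past, $\EE[F(x;\xi^t) \mid x^t] = F(x)$ by the unbiasedness assumption, so $\EE \<F(x) - F(x; \xi^t), x^t - x> = 0$ by the tower rule.

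For the first summand I would apply Young's inequality with parameter $\eta$, namely $\<a,b> \le \eta\|a\|^2 + \tfrac{1}{4\eta}\|b\|^2$, to obtain
\begin{align*}
\<F(x) - F(x; \xi^t), y^t - x^t>
\le \eta \|F(x) - F(x; \xi^t)\|^2 + \frac{1}{4\eta}\|y^t - x^t\|^2.
\end{align*}
Taking expectation and invoking the bound $\EE\|F(x) - F(x;\xi^t)\|^2 \le \sigma_x^2$ finishes the proof.

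The argument is short and I do not anticipate a major obstacle; the only conceptual point worth emphasizing is why one must split off $x^t - x$ rather than attack $y^t - x$ directly. The iterate $y^t$ depends on $\xi^t$ (it is $\prox{\eta g}{x^t - \eta F(x^t;\xi^t)}$), so $\<F(x) - F(x;\xi^t), y^t - x>$ does not have zero mean. By contrast, $x^t - x$ is independent of $\xi^t$, so the decomposition above isolates exactly the piece whose mean vanishes and leaves a remainder controllable by $\|y^t - x^t\|^2$, which is the quantity that naturally appears elsewhere in the descent analysis.
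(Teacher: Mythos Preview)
Your proposal is correct and matches the paper's proof essentially line for line: the same decomposition $y^t - x = (y^t - x^t) + (x^t - x)$, the same independence argument to kill the $x^t - x$ term, and the same application of Young's inequality with weight $\eta$ on the remainder. The only cosmetic difference is that $\EE\|F(x) - F(x;\xi^t)\|^2 = \sigma_x^2$ is an equality by definition rather than an inequality.
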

\begin{proof}
	As $x^t$ and $\xi^t$ are independent random variables and $\EE F(x; \xi^t) = F(x)$, we have
\begin{align*}
	\EE \<F(x) - F(x; \xi^t), y^t - x>
	&= \EE\<F(x) - F(x; \xi^t), x^t - x> + \EE\<F(x) - F(x; \xi^t), y^t - x^t> \\
	&= \EE\<F(x) - F(x; \xi^t), y^t - x^t>.
\end{align*}
By Young's inequality,
\begin{align*}
	\EE\<F(x) - F(x; \xi^t), y^t - x^t>
	&\le \eta\EE\|F(x) - F(x; \xi^t)\|^2 + \frac{1}{4\eta}\EE\|y^t - x^t\|^2 \\
	&= \eta\sigma_x^2 +  \frac{1}{4\eta}\EE\|y^t - x^t\|^2 
\end{align*}
and the proof is complete.
\end{proof}
Now we are ready to prove Theorem~\ref{th:linear_rate_str_mon}.
\begin{proof}
	By Lemma~\ref{lem:prox-strong} for points $y^t = \prox{\eta g}{x^t - \eta F(x^t; \xi^t)}$ and $x^{t+1} = \prox{\eta g}{x^t - \eta F(y^t; \xi^t)}$,
  \begin{align*}
    \<x^{t+1}-x^t + \eta F(y^t; \xi^t),x^*-x^{t+1}> 
    & \geq \eta \bigl(
    g(x^{t+1})-g(x^*) + \frac{\mu}{2} 
    \n{x^{t+1}-x^*}^2\bigr)  \\
    \<y^t-x^t + \eta F(x^t; \xi^t),x^{t+1}-y^t> 
    & \geq \eta \bigl(
    g(y^t)-g(x^{t+1}) + \frac{\mu}{2}
    \|x^{t+1}-y^t\|^2\bigr). 
  \end{align*}
  Summing these two inequalities together and rearranging, we get
  \begin{multline*}
    \lr{x^{t+1}-x^t,x^*-x^{t+1}} + \lr{y^t-x^t,
    x^{t+1}-y^t} + \eta \lr{F(y^t; \xi^t)-F(x^t; \xi^t), y^t-x^{t+1}} + \eta
    \lr{F(y^t; \xi^t),x^*-y^t} \\ \geq   \eta \bigl(g(y^t)  - g(x^*) + \frac
    \mu 2 \n{x^{t+1}-x^*}^2 + \frac \mu 2 \n{x^{t+1}-y^t}^2 \bigr).
  \end{multline*}
  Using identity  $2\lr{a,b} = \n{a+b}^2 - \n{a}^2-\n{b}^2$ for the
  first two scalar products, we deduce
  \begin{align*}
    (1+\eta \mu) \n{x^{t+1}-x^*}^2&\leq \n{x^t-x^*}^2  -
                                                  \n{x^t-y^t}^2-(1+\eta\mu)\n{x^{t+1}-y^t}^2\\
                                                  &+2\eta \lr{F(y^t; \xi^t)-F(x^t; \xi^t), y^t-x^{t+1}}-2\eta\bigl(\lr{F(y^t; \xi^t),y^t-x^*} +g(y^t)-g(x^*)\bigr).
  \end{align*}
  The first scalar product can be simplified using Lipschitzness. Since $F(\cdot; \xi^t)$ is almost surely $L$--Lipschitz, by Young's inequality
\begin{align*}
  2\eta \lr{F(y^t; \xi^t)-F(x^t; \xi^t), y^t-x^{t+1}} 
  &\le \frac{\eta}{L}\|F(y^t; \xi^t)-F(x^t; \xi^t)\|^2 + \eta L\|y^t-x^{t+1}\|^2 \\
  &\leq \eta L \bigl(\n{x^{t+1}-y^t}^2 + \n{y^t-x^t}^2\bigr).
\end{align*}
To get rid of the other scalar product, we use monotonicity of $F(\cdot; \xi^t)$, and then apply strong convexity of $g$,
\begin{align*}
    \label{extra_lin:mon}
     \lr{F(y^t; \xi^t),y^t-x^*} + g(y^t)-g(x^*)
     &\geq \lr{F(x^*; \xi^t),y^t-x^*} + g(y^t)-g(x^*) \\
     &=\lr{F(x^*),y^t-x^*} + g(y^t)-g(x^*) + \<F(x^*; \xi^t) - F(x^*), y^t - x^*> \\
     &\geq \frac{\mu}{2}\n{y^t-x^*}^2+ \<F(x^*; \xi^t) - F(x^*), y^t - x^*>.
\end{align*}
So far, the proof has not involved any expectation, but now we shall use Lemma~\ref{lem:eg_variance} to deduce from the produced bounds
\begin{align*}
	(1+\eta \mu) \EE\n{x^{t+1}-x^*}^2
	&\leq \EE\Bigl[\n{x^t-x^*}^2 - \eta \mu
    \bigl(\n{y^t-x^*}^2 + \n{x^{t+1}-y^t}^2\bigr)\Bigr] + 2\eta^2\sigma^2\\
    &\quad - \underbrace{(1 - \eta L - \tfrac{1}{2})}_{\ge 0}\EE\|y^t - x^t\|^2  \\
    &\le \EE\Bigl[\n{x^t-x^*}^2 - \eta \mu
    \bigl(\n{y^t-x^*}^2 + \n{x^{t+1}-y^t}^2\bigr)\Bigr]+ 2\eta^2\sigma^2.
\end{align*}
  Using inequality $ \n{a}^2+\n{b}^2\geq \frac 1 2 \n{a+b}^2$, we arrive at
  \begin{equation*}
    \Bigl(1+\frac 3 2 \eta \mu \Bigr) \n{x^{t+1}-x^*}^2\leq \n{x^t-x^*}^2 + 2\eta^2\sigma^2.
  \end{equation*}
  Note that $\eta\mu\le 1/2$ and, therefore, $\frac{1}{1 +
      3\eta\mu/2}\le (1 - 2\eta\mu/3)$. The statement of the theorem
  can be now easily obtained by induction.
\end{proof}

\textbf{Proof of Theorem~\ref{th:ergodic_conv}}

Let  $x\in \cX$. Similarly to the proof of Theorem~\ref{th:linear_rate_str_mon}, we can obtain from Lemma~\ref{lem:prox-strong} with $\mu=0$
  \begin{align*}
    \n{x^{t+1}-x}^2
    &\leq \n{x^t-x}^2  -\n{x^t-y^t}^2-\n{x^{t+1}-y^t}^2 + 2\eta L(\|x^{t+1} - y^t\|^2 + \|y^t - x^t\|^2\|)\\ 
    &\quad -2\eta\bigl(\lr{F(y^t; \xi^t),y^t-x} +g(y^t)-g(x)\bigr) \\
    &\le \n{x^t-x}^2  -\frac{1}{2}\n{x^t-y^t}^2-\n{x^{t+1}-y^t}^2 \\ 
    &\quad -2\eta\bigl(\lr{F(y^t; \xi^t),y^t-x} +g(y^t)-g(x)\bigr) .
  \end{align*}
By monotonicity of $F(\cdot; \xi^t)$ and Lemma~\ref{lem:eg_variance} we deduce
  \begin{align*}
  	\EE\<F(y^t; \xi^t), x - y^t>
  	&\le \EE\<F(x; \xi^t), x - y^t> \\
  	&\le \eta\sigma_x^2 + \EE\<F(x), x - y^t> + \frac{1}{4\eta}\EE\|y^t - x^t\|^2.
  \end{align*}
  Therefore,
  \begin{align*}
  	\EE\left[g(y^t) - g(x) + \<F(x), y^t - x>\right]
  	\le \frac{1}{2\eta}\EE\left[\|x^t-x\|^2 - \|x^{t+1} - x\|^2 \right] + \eta\sigma_x^2.
  \end{align*}
  Telescoping this inequality, we obtain
	\begin{align*}
		\EE\frac{1}{t+1}\sum_{k=0}^t(g(y^k) - g(x) + \< F(x), y^k - x>)
		\le \frac{1}{2\eta t}\|x^0 - x\|^2+ \eta \sigma_x^2
		\le \sup_{z\in\cX} \left\{\frac{1}{2\eta t}\|x^0 - z\|^2+ \eta \sigma_z^2\right\}.
	\end{align*}	  
  The left-hand side is a convex function $y^k$. Therefore, choosing $\eta = \cO\left( \frac{1}{\sqrt{t}} \right)$ and applying Jensen's inequality to the left-hand side, we get the  claim.

\textbf{Proof of Theorem~\ref{th:eg_bilinear}}
\begin{proof}
    Since the function is bilinear, we can write
    \begin{align*}
        \nabla_x f(x, y) = \mB(y-y^*), && \nabla_y f(x, y) = \mB^\top (x-x^*).
    \end{align*}
    Then, we obtain the explicit update rules
    \begin{align*}
        x^{t+1} 
        &= x^t - \eta_2 \mB(v^t - y^*)
        = x^t - \eta_2 \mB(y^t - y^* + \eta_1 \mB^\top (x^t - x^*))\\
        y^{t+1} 
        &= y^t + \eta_2 \mB^\top (u^t - x^*)
        = y^t + \eta_2 \mB^\top (x^t - x^* - \eta_1 \mB (y^t - y^*)).
    \end{align*}
    In matrix forms it is
    \begin{align*}
        \begin{bmatrix}x^{t+1} - x^* \\ y^{t+1} - y^* \end{bmatrix}
        = \begin{pmatrix}\mI - \eta_1\eta_2 \mB\mB^\top  & -\eta_2 \mB \\ \eta_2\mB^\top & \mI - \eta_1\eta \mB^\top\mB\end{pmatrix}
        \begin{bmatrix}x^{t} - x^* \\ y^{t} - y^* \end{bmatrix}
    \end{align*}
    Apply SVD decomposition to $\mB$: $\mB = \mU\mSigma \mV^\top$, where $\mU$ and $\mV$ are orthogonal and $\mSigma=\mathrm{diag}(\sigma_1, \dotsc, \sigma_n)$. Then,
    \begin{align*}
        \left\|\begin{bmatrix}x^{t+1} - x^* \\ y^{t+1} - y^* \end{bmatrix} \right\|
        \le \left\|\begin{pmatrix}\mI - \eta_1\eta_2 \mB\mB^\top  & -\eta_2 \mB \\ \eta_2\mB^\top & \mI - \eta_1\eta_2 \mB^\top\mB\end{pmatrix} \right\|
        \left\|\begin{bmatrix}x^{t} - x^* \\ y^{t} - y^* \end{bmatrix} \right\|.
    \end{align*}
    Since $\mU$ and $\mV$ are orthogonal, we have
    \begin{align*}
        \mB\mB^\top 
        &= \mU \mSigma^2\mV^\top,\\
        \mB^\top\mB
        &= \mV \mSigma^2 \mU^\top,
    \end{align*}
    and
    \begin{align*}
        \left\|\begin{pmatrix}\mI - \eta_1\eta \mB\mB^\top  & -\eta_2 \mB \\ \eta_2\mB^\top & \mI - \eta_1\eta \mB^\top\mB\end{pmatrix} \right\|
        &= \left\|\begin{pmatrix} \mU& 0\\0 &\mV \end{pmatrix}\begin{pmatrix}\mI - \eta_1\eta \mSigma^2  & -\eta_2 \mSigma \\ \eta_2\mSigma & \mI - \eta_1\eta \mSigma^2\end{pmatrix} \begin{pmatrix} \mU^\top& 0\\0 &\mV^\top \end{pmatrix}\right\|\\
        &= \left\|\begin{pmatrix}\mI - \eta_1\eta_2 \mSigma^2  & -\eta_2 \mSigma \\ \eta_2\mSigma & \mI - \eta_1\eta_2 \mSigma^2\end{pmatrix}\right\| \\
        &= \max_i \left\|\begin{pmatrix}1 - \eta_1\eta_2 \sigma_i^2  & -\eta_2 \sigma_i \\ \eta_2\sigma_i & 1 - \eta_1\eta_2 \sigma_i^2\end{pmatrix}\right\|\\
        &= \max_i\sqrt{(1 - \eta_1\eta_2\sigma_i^2)^2 + \eta_2^2\sigma_i^2}.
    \end{align*}
    Assume without loss of generality that
    $\sigma_1\ge\dotsb\ge\sigma_n$. Note that function $x\mapsto \left(1-\frac{\eta_1}{\eta_2}x^2\right)^2 + x^2$ is monotonically decreasing on $\left(0, c\right)$ and monotonically increasing on $\left(c, +\infty\right)$, where $c$ is $+\infty$ if $\eta_2\ge 2\eta_1$ and $ \frac{\eta_2}{\sqrt{2}\eta_1}\sqrt{2\frac{\eta_1}{\eta_2}-1}$ otherwise. Consequently, it holds
    \begin{align*}
        \max_i\{(1 - \eta_1\eta_2\sigma_i^2)^2 + \eta_2^2\sigma_i^2\}
        &= \max\{(1 - \eta_1\eta_2\sigma_1^2)^2 + \eta_2^2\sigma_1^2, (1 - \eta_1\eta_2\sigma_n^2)^2 + \eta_2^2\sigma_n^2\}.
    \end{align*}
\end{proof}
\textbf{Proof of Corollary~\ref{cor:bilinear}}
\begin{proof}
    These statements follow from the bound obtained in Theorem~\ref{th:eg_bilinear}. Since function $(1 - x^2)^2 + x^2$ monotonically decreases when $x\in\left(0, \frac{1}{\sqrt{2}}\right)$, we have $\rho=(1 - \eta_1\eta_2\sigma_{\min}(\mB)^2)^2 + \eta_2^2\sigma_{\min}(\mB)^2 = \left(1 - \frac{\sigma_{\min}(\mB)^2}{2\sigma_{\max}(\mB)^2}\right)^2 + \frac{\sigma_{\min}(\mB)^2}{2\sigma_{\max}(\mB)^2}$. The second case follows similarly.
\end{proof}
\subsection{Negative momentum}\label{sec:momentum}
For bilinear problems with two types of momentum the update recurrence is
\begin{align*}
        \begin{bmatrix}x^{t+1} - x^* \\ y^{t+1} - y^* \\ x^t - x^* \\ y^t - y^* \end{bmatrix}
        = \begin{pmatrix}
             (1 + \beta_2)\mI - \eta_1\eta_2 \mB\mB^\top  & -\eta_2(1+\beta_1) \mB & -\beta_2\mI & \eta_2\beta_1\mB \\
             \eta_2(1 + \beta_1)\mB^\top & (1 + \beta_2)\mI - \eta_1\eta \mB^\top\mB & -\eta_2\beta_1\mI & -\beta_2\mI  \\
             \mI & 0 & 0 & 0 \\
             0 & \mI & 0 & 0
        \end{pmatrix}
        \begin{bmatrix}x^{t} - x^* \\ y^{t} - y^*\\ x^{t-1} - x^* \\ y^{t-1} - y^* \end{bmatrix}.
\end{align*}
Using SVD decomposition, we can represent the above matrix as block-diagonal with blocks $\mT_i$
\begin{align} \label{eq:momentum_matrix}
	\mT_i	
	=\begin{pmatrix}
             1 + \beta_2 - \eta_1\eta_2 \sigma_i^2  & -\eta_2(1+\beta_1) \sigma_i & -\beta_2 & \eta_2\beta_1 \sigma_i \\
             \eta_2(1 + \beta_1)\sigma_i & 1 + \beta_2 - \eta_1\eta \sigma_i^2 & -\eta_2\beta_1 & -\beta_2  \\
             1 & 0 & 0 & 0 \\
             0 & 1 & 0 & 0
        \end{pmatrix},
\end{align}
where $\sigma_i$ is the $i$-th the singular value of $\mB$.

One can show that the spectral radius of this matrix improves with negative $\beta_2$, however this is not true for its second norm. Since this is a very technical property that can be easily illustrated numerically, we simply provided a plot of how spectral radius changes depending on values of $\eta\sigma$ and $\beta_2$ when $\beta=1=0$ and $\eta_1=\eta_2=\eta$, see Figure~\ref{fig:bilinear_heatmap}. In addition, here we provide the heatmap for $\eta_1=\eta_2$ and product $\eta\sigma=0.01$. As can be seen from Figure~\ref{fig:bilinear_heatmap2}, nonzero $\beta_1$ is not very promising and $\beta_2$ leads only  to a small improvement. Thus, it gives advantage mainly for large values of $\eta\sigma$.
\begin{figure}
	\centering
	\includegraphics[scale=0.4]{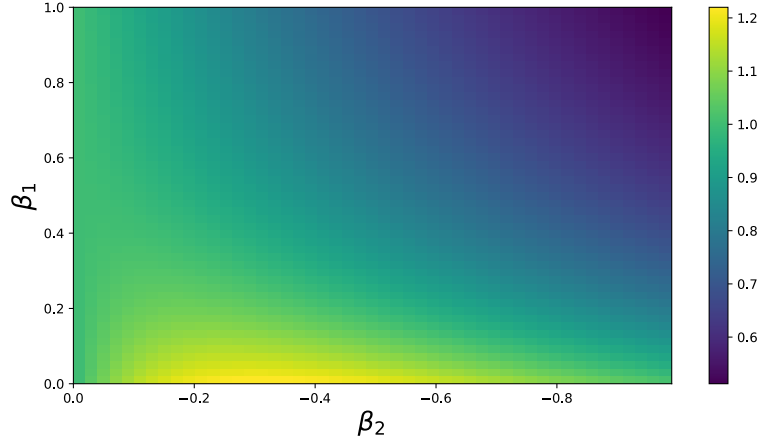}
	\caption{Ratio of spectral radii as in Figure~\ref{fig:bilinear_heatmap} but with fixed $\eta\sigma=0.01$ and different values of $\beta_1$ and $\beta_2$.}
	\label{fig:bilinear_heatmap2}
\end{figure}

\subsection{Proof of Theorem~\ref{th:nonconvex}}
Let us introduce a notation that simplifies the proof. We will denote by $\EE_t$ the expectation conditioned on $x^t$, i.e., $\EE_t [\cdot] = \EE [\cdot \mid x^t]$.
\begin{proof}
	Recall that $y^t = x^t - \eta \nabla f(x^t; \xi^t)$, $x^{t+1} = x^t - \eta \nabla f(y^t; \xi^t)$, and apply smoothness of $f$ to $x^{t+1}$ and $x^t$:
	\begin{align*}
		f(x^{t+1})
		&\le f(x^t) + \<\nabla f(x^t), x^{t+1} - x^t> + \frac{L}{2} \|x^{t+1} - x^t\|^2 \\
		&= f(x^t) - \eta\|\nabla f(x^t)\|^2 + \eta\<\nabla f(x^t), \nabla f(x^t) - \nabla f(y^t; \xi^t)> + \frac{L\eta^2}{2}\|\nabla f(y^t; \xi^t)\|^2.
	\end{align*}
	Since $\nabla f(x^t; \xi^t)$ is an unbiased  estimate of
        $\nabla f(x^t)$, it follows by Young's inequality and
        smoothness of $f(\cdot; \xi^t)$
	\begin{align*}
		\eta \< \nabla f(x^t), \nabla f(x^t) - \nabla f(y^t; \xi^t)>
		&= \EE_t\eta \< \nabla f(x^t), \nabla f(x^t; \xi^t) - \nabla f(y^t; \xi^t)>\\
		&\le \frac{\eta^2 L}{2}\|\nabla f(x^t)\|^2 + \frac{1}{2L}\EE_t \|\nabla f(x^t; \xi^t) - \nabla f(y^t; \xi^t)\|^2 \\
		&\le \frac{\eta^2L}{2}\|\nabla f(x^t)\|^2 + \frac{L}{2} \EE_t\|x^t - y^t\|^2 \\
		&= \frac{\eta^2L}{2}\|\nabla f(x^t)\|^2 + \frac{\eta^2 L}{2} \EE_t\|\nabla f(y^t; \xi^t)\|^2 .
	\end{align*}
	Moreover, similar arguments show how to bound the expectation of the squared gradient norm:
	\begin{align*}
		\EE_t \|\nabla f(y^t; \xi^t)\|^2
		&\le 2\EE_t \|\nabla f(y^t; \xi^t) - \nabla f(x^t; \xi^t)\|^2 + 2\EE_t \|\nabla f(x^t; \xi^t)\|^2 \\
		&\le 2L^2\EE_t\|y^t - x^t\|^2 + 2\EE_t \|\nabla f(x^t; \xi^t)\|^2 \\
		&= 2(1 + L^2 \eta^2) \EE_t\|\nabla f(x^t; \xi^t)\|^2 \\
		&\le 2(1 + L^2 \eta^2) (\|\nabla f(x^t)\|^2 + \sigma^2).
	\end{align*}
	Thus,
	\begin{align*}
		\EE_t f(x^{t+1})
		\le f(x^t) - \eta\left[1 - \eta L - 2\eta L(1 + \eta^2 L^2)\right]\|\nabla f(x^t)\|^2 + 2\eta^2 L(1 + \eta^2 L^2)\sigma^2.
	\end{align*}
	If $\eta L \le \frac{1}{4}$, we have $1 - \eta L - 2\eta L(1 + \eta^2 L^2) > \frac{1}{5}$, so this bound can be simplified to
	\begin{align*}
		\|\nabla f(x^t)\|^2
		\le \frac{5}{\eta}\EE_t[f(x^{t}) - f(x^{t+1})] + 11\eta L\sigma^2.
	\end{align*}
	Telescoping this inequality from $0$ to $t-1$ and taking full expectation with respect to all randomness, we get
	\begin{align*}
		\frac{1}{t}\sum_{k=0}^{t-1} \EE\|\nabla f(x^k)\|^2
		&\le \frac{5}{\eta t}(f(x^{0}) - f(x^{t})) + 11\eta L\sigma^2 \\
		&\le \frac{5}{\eta t}(f(x^{0}) - f^*) + 11\eta L\sigma^2 .
	\end{align*}
	It remains to mention that the left-hand side is exactly the expectation of $\EE\|\nabla f(\hat x^t)\|^2$.
\end{proof}

\section{Additional experiments}
\subsection{Reproducing mixture of eight Gaussians}
We also double check that extragradient converges on the mixture of 8 Gaussians. This experiment is a sanity that allows us to show that the method can do at least as well as alternating gradient~\citep{gidel2018negative}. To directly relate to their experiments, we ran extragradient on the same type of network, although we changed activation from ReLU to tanh, which was more stable in our experiments. Note that~\citep{gidel2018negative} ran alternating method for 100,000 iterations, while we required only 20,000, which corresponds to 40,000 generator updates. The result is presented in Figure~\ref{fig:8gaussian}.
\begin{figure}
\centering
{\includegraphics[scale=0.4]{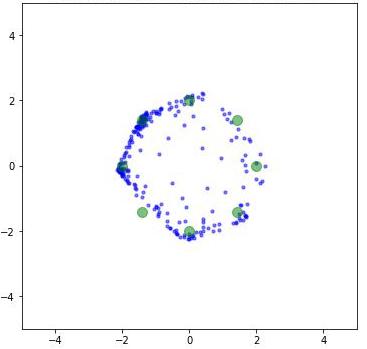}}
\caption{Samples from generator after training for 20,000 iterations of minibatch 512 with extragradient. Both generator and discriminator are 4-layers neural networks with tanh activation and the dimension of the noise distribution is 256.}
\label{fig:8gaussian}
\end{figure}
\subsection{Empirical risk minimization}\label{sec:erm_exp}
As our theory suggests, stochastic extragradient might not be better than SGD when solving a simple task such as function minimization. To see how it works in practice, we trained Residual Network~\citep{he2016deep}, Resnet-18, on Cifar10~\citep{krizhevsky2009learning} dataset with cross-entropy loss and different stepsizes, and compared the results to SGD. In order to see the effect of the update rule, we do not use any type of momentum in this experiment and keep the learning rate constant. Our observation in this situation is that extragradient is indeed slower, both because of the need to compute two gradients per iterations and because of worse final accuracy.
\begin{figure}
\centering
{\includegraphics[scale=0.25]{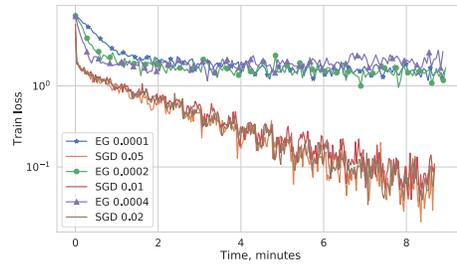}}
\caption{Comparison of the proposed stochastic extragradient and stochastic gradient descent when optimizing Residual Network with 18 hidden layers on Cifar10 dataset. We report only the train loss as  this is the most relevant metric for an optimization method, and test accuracy in this experiment behaved similarly.}
\end{figure}

\subsection{Samples of generated images}
\begin{figure}[bh]
	\centering
	\includegraphics[scale=0.3]{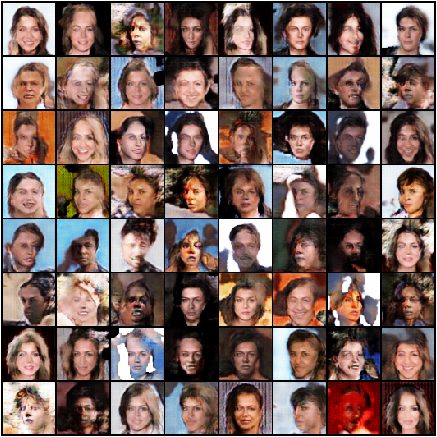}
	\includegraphics[scale=0.3]{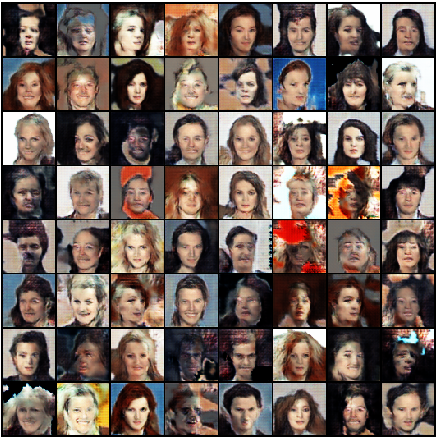}
	\includegraphics[scale=0.3]{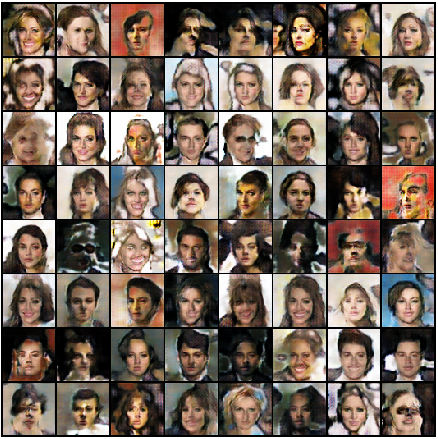}\\
	\includegraphics[scale=0.3]{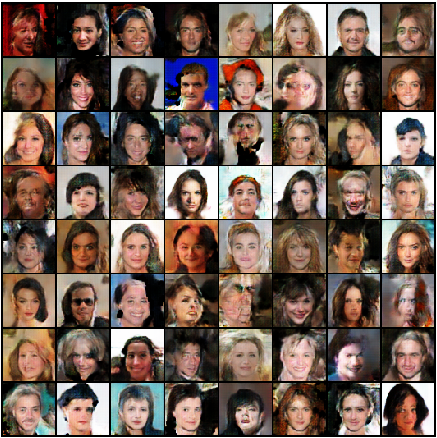}
	\includegraphics[scale=0.3]{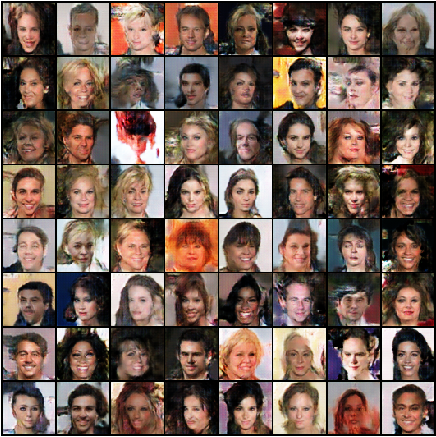}
	\includegraphics[scale=0.3]{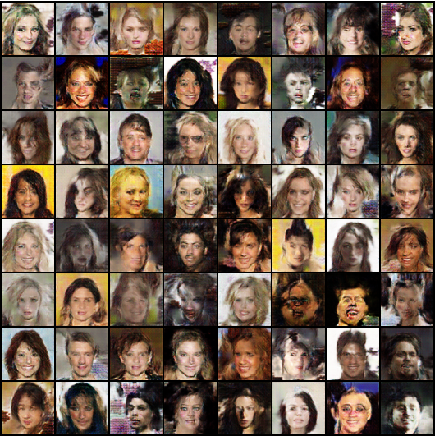}
	\caption{Adam (top) and ExtraAdam (bottom) results of training self attention GAN for two epochs. The results of training with the three best performing stepsizes, $10^{-3}, 2\cdot 10^{-3}, 4\cdot 10^{-3}$, are provided for each method (from the left to the right). Best seen in color by zooming on a computer screen.}\label{fig:celeba}
\end{figure}

\end{document}